\newtheorem{theorem}{Theorem}
\newtheorem{lemma}{Lemma}
\theoremstyle{remark}
\newtheorem{remark}{Remark}
\g@addto@macro\normalsize{
  \setlength\abovedisplayskip{5pt}
  \setlength\belowdisplayskip{5pt}
  \setlength\abovedisplayshortskip{3pt}
  \setlength\belowdisplayshortskip{3pt}
}
\journal{}
\begin{document}

\begin{frontmatter}



  \title{Stable self-adaptive timestepping for Reduced
  Order Models for incompressible flows}


  \author[cttc]{Josep Plana-Riu} 
  \author[cwi,tue]{Henrik Rosenberger}
  \author[cwi,tue]{Benjamin Sanderse}
  \author[cttc]{F.Xavier Trias} 

  \affiliation[cttc]{organization={Heat and Mass Transfer Technological Centre, Technical University of Catalonia},
            addressline={Carrer de Colom 11}, 
            city={Terrassa},
            postcode={08222}, 
            country={Spain}}
            \affiliation[cwi]{organization={Centrum Wiskunde \& Informatica},
            addressline={Science Park 123}, 
            city={Amsterdam},
            postcode={1098 XG}, 
            country={the Netherlands}}
            \affiliation[tue]{organization={Centre for Analysis, Scientific
            Computing and Applications, Eindhoven University of Technology},
            addressline={PO Box 513}, 
            city={Eindhoven},
            postcode={5600 MB}, 
            country={the Netherlands}}

\begin{abstract}
  This work introduces \texttt{RedEigCD}, the first self-adaptive timestepping technique
  specifically tailored for reduced-order models (ROMs) of the incompressible Navier-Stokes
  equations that employ stucture-preserving discretizations. Building upon linear stability concepts, the method adapts the timestep by directly
  bounding the stability function of the employed time integration scheme using exact spectral information
  of matrices related to the reduced operators. Unlike traditional error-based adaptive methods, \texttt{RedEigCD} relies on
  the eigenbounds of the convective and diffusive ROM operators, whose computation is feasible at reduced scale and fully preserves the online efficiency of the ROM.
  A central theoretical contribution of this work is the proof, based on the combined theorems of Bendixson and Rao,
  that, under linearized assumptions, the maximum stable timestep for projection-based ROMs is shown to be larger than or equal
  to that of their corresponding
  full-order models (FOMs).

  Numerical experiments for both periodic and non-homogeneous boundary conditions demonstrate that \texttt{RedEigCD} yields
  stable timestep increases up to a factor 40 compared to the FOM, without compromising accuracy. The methodology thus
  establishes a new link between linear stability theory and reduced-order modeling, offering a systematic path towards efficient,
  self-regulating ROM integration in incompressible flow simulations.
\end{abstract}


\begin{highlights}
\item Introduces \texttt{RedEigCD}, a stability-based self-adaptive timestepper for ROMs.
\item Provides a theoretical proof that ROMs permit larger stable timesteps than FOMs.
\item Extends Bendixson and Rao theorems to analyze ROM stability bounds.
\item Achieves up to 40x larger stable timesteps without accuracy loss.
\item Establishes a general, stable, efficient framework for adaptive ROM time integration.
\end{highlights}

\begin{keyword}
  Computational Fluid Dynamics \sep Incompressible Navier-Stokes \sep
  Reduced-Order Modeling \sep Linear
  stability analysis \sep Self-adaptive time integration



\end{keyword}

\end{frontmatter}


\section{Introduction}
The numerical integration of nonlinear ordinary differential equations (ODEs), $\frac{d}{dt}\mathbf u=f(\mathbf u(t),t)$, like those arising from spatial discretization of the Navier-Stokes equations, requires
setting the timestep of the integration.
In the classical works of \citet{hairer_solving_1993, butcher_numerical_2016},
this timestep, $\Delta t$, is generally assumed to be predetermined by the user, while later on, methods such as the Dormand-Price method (RKDP)
\cite{dormand_family_1980} or Fehlberg (RKF) \cite{fehlberg_klassische_1969}
were introduced to provide adaptive timestepping, based on truncation error estimates.
As an alternative, the timestep $\Delta t$ can also be determined by
studying the stability of the linear system

\begin{equation}
  \frac{d\mathbf{u}}{dt} = A\mathbf{u},
  \label{eq:linear_ode}
\end{equation}

\noindent where $A$ follows from locally linearizing $f(\mathbf{u}(t),t)$.
Restricting the product of the eigenbound of $A$, $\rho(A)$, and the timestep $\Delta t$ to lie in the boundary of the stability 
region of the numerical time integration method thus maximizes the
timestep when using an explicit time integration. 
\citet{courant_uber_1928} first introduced this concept in context of Computational Fluid Dynamics (CFD) with 
the development of the Courant-Friedrichs-Lewy (CFL) condition,
where the stability condition is met by estimating the eigenvalues of the 1D convection-diffusion equation. Later on,
this was generalized to the 3D Navier-Stokes equations by 
\citet{trias_self-adaptive_2011}, where the timestep of the integration
was modified each timestep to lie within the limit of the stability region of a second-order explicit multistep
scheme. This was done by estimating the eigenvalues of the linearized system
by means of the Gershgorin circle theorem. Later works such as \cite{trias_efficient_2024}
improve the method by estimating the eigenbound of the linearized system without the need to
fully reconstruct the matrix by exploiting its structure, thus making it more efficient.

Nonetheless, scale-resolving simulations, i.e., direct numerical simulation (DNS) and large-eddy simulation (LES), are
generally prohibitively costly, especially for tasks that require many simulations such as control, design, optimization, or uncertainty quantification
\cite{sanderse_non-linearly_2020}. Hence, reduced-order models (ROMs) seek to
alleviate the computational costs by projecting the high-dimensional system, also known as
full-order model (FOM), to a reduced subspace \cite{leblond_optimal_2011, reyes_projection-based_2020,prakash_projection-based_2024}, which depends on the
construction of a basis. The methodology used for constructing this basis
depends mostly on the characteristics of the problem, i.e., in an unsteady case
where no parameters are involved, usually this basis is built as a set of
Proper Orthogonal Decomposition (POD)
modes from a matrix of snapshots
\cite{lumley_structure_1967,sirovich_turbulence_1987,holmes_turbulence_1996}, while in a case where the problem is
parameter-based, this basis is built based on a (greedy) Reduced Basis algorithm \cite{quarteroni_numerical_2007,manzoni_efficient_2014}. However, the procedure after constructing the basis is
equivalent. For more details, the reader is referred to
\citet{lassila_model_2014}.

Beyond timestep control at the FOM level, several contributions have addressed adaptive or stability-aware strategies in
the reduced-order modeling framework. \citet{amsallem_stabilization_2012} proposed interpolation and stabilization techniques for
parametric ROMs, emphasizing the importance of maintaining spectral consistency of the reduced operators. Their work highlighted
the importance of maintaining consistency between the ROM and FOM eigenvalues to ensure numerical robustness. Nevertheless, this stabilization strategy does not explicitly link timestep adaptivity to linear stability theory, nor does it exploit the exact spectral information that can be computed efficiently at a reduced scale. Building on the concept of using the eigenvalues of the linear operator to guide subspace rotation {\cite{balajewicz2016minimal}}, \citet{rezaian_global_2021} demonstrated that analyzing and reassigning the eigenvalues of the reduced system's linear operator provides an effective means to control instability while preserving dominant dynamical features, establishing spectral analysis as a central tool for stability-aware model reduction.

Even though lots of efforts are devoted to pushing the boundaries of
POD-Galerkin ROMs applied to CFD (closure models \cite{wang_proper_2012,ahmed_closures_2021}, structure-preserving operators \cite{rosenberger_no_2023, klein_energy-conserving_2024, farhat_structure-preserving_2015}, hyper-reduction \cite{barrault_empirical_2004,chaturantabut_state_2012,farhat_structure-preserving_2015,klein_energy-conserving_2024}, stabilization \cite{ahmed_stabilized_2018,stabile_reduced_2019,li_pressure-stabilized_2025}, etc.), little effort has been made to finetune the timestep
to maximize the "performance" of the time integration for these ROMs.
\citet{sanderse_non-linearly_2020} indicated the use of the Gershgorin circle theorem as a possible way to estimate the eigenbounds of the combined reduced system to set the timestep, however no tests or implementation were
presented. While recent non-intrusive frameworks have employed Gershgorin-based regularization to ensure the stability of learned operators, e.g., Gkimisis et al. {\cite{gkimisis2025}}, the present work realizes Sanderse's vision {\cite{sanderse_non-linearly_2020}} in an intrusive context by implementing a stability-based adaptive timestepper. This aligns with efforts in other fields, such as structural mechanics, where \citet{bach_stability_2018}
performed an extensive analysis of the stability conditions for projection-based ROMs, where the maximum stable timesteps for both FOM and ROM were compared analytically. It was observed that the maximum stable timestep at the ROM level, assuming a linear problem, would not be smaller than the maximum stable timestep at the FOM level. To the best of our knowledge, such an analysis has not been performed for the nonlinear incompressible Navier-Stokes equations.

Therefore, this work aims to bridge the gap between self-adaptive
timestepping techniques developed for scale-resolving CFD simulations
\cite{trias_self-adaptive_2011,trias_efficient_2024} and reduced-order
models, as according to \citet{bach_stability_2018}, it should be possible to generally obtain larger timesteps. This observation is further supported by recent developments across various ROM paradigms. Beyond machine-learning-augmented frameworks that utilize coarsened timestepping \cite{Puri2025, pawar2025neural,san2018extreme}, both classical Galerkin {\cite{carlberg2017galerkin}} and non-intrusive projections {\cite{xiao2015non}} have demonstrated the ability to maintain stability and qualitative physics at temporal resolutions significantly coarser than the FOM.
Thus, the potential benefit of using a ROM is twofold: the reduced cost due to dimension reduction, and 
marching faster in time due to larger timesteps. 

To extend existing self-adaptive timestepping methodologies to ROMs, the estimation of
the eigenbounds should not break the leitmotif of reduced-order modeling
\cite{stabile_finite_2018}: online FOM-size operations should be avoided, in order to preserve the efficiency of the
ROM compared to the FOM. 

The present work introduces a new class of stability-driven self-adaptive timestepping algorithms
for reduced-order models of the incompressible Navier-Stokes equations. The key novelty lies in (i)
extending existing self-adaptive timestep control strategies - initially developed for full-order CFD solvers - to
projection-based ROMs while fully preserving reduced-order efficiency, and (ii) establishing a theoretical proof, based on
the combined theorems of Bendixson \cite{bendixson_sur_1902} and Rao \cite{rao_separation_1979}, that the maximum stable timestep
of a ROM is always larger or equal to that of its FOM counterpart. This connection between linear stability theory and reduced-order
modeling has not been reported before for nonlinear incompressible flow systems. Furthermore, the proposed algorithm, termed 
\texttt{RedEigCD}, achieves stability control without resorting to full-order model operations. Lastly, it applies to both periodic and
inflow-outflow configurations. While the current derivation assumes a structure-preserving, discretely divergence-free discretization, the underlying spectral bounding framework remains generalizable to non-divergence-free formulations through standard operator splitting. Together, these elements form a novel stability-aware time-integration framework that advances the state-of-the-art in efficient and reliable ROM-based CFD simulations.

This article is organized as follows. In Section \ref{sec:preliminaries}, the
full-order model, the theory for linear stability analysis, and projection-based ROMs
are introduced. In Section \ref{sec:redeig}, our novel self-adaptive timestepping method for ROMs is presented, as well as an explanation why maximum
stable timesteps for ROMs are generally larger than for their FOM counterparts. In
Section \ref{sec:results}, the novel self-adaptive timestepping method is
tested in two test cases with different boundary conditions. Eventually, in Section \ref{sec:conclusion}, 
 conclusions are drawn and future work is outlined.
\section{Preliminaries: full-order model, linear stability analysis, and projection-based reduced-order model} \label{sec:preliminaries}

\subsection{Full-order model} \label{sec:fom}

The incompressible Navier-Stokes equations describe the behaviour of incompressible flows. In dimensionless form, they read
\begin{subequations}
\begin{align}
  \nabla\cdot\mathbf{u} &= 0, \\
  \frac{\partial\mathbf{u}}{\partial t} + (\mathbf{u}\cdot\nabla)\mathbf{u} &= -\nabla p + \frac{1}{\text{Re}}\nabla^2\mathbf{u} + \mathbf{f}, \label{eq:mom_ns}
\end{align}
\end{subequations}
\noindent where $\mathbf{u}=\mathbf{u}(\mathbf{x},t)$ is the flow velocity vector field, $p=p(\mathbf{x},t)$ 
is the pressure scalar field, Re is the (constant and uniform) Reynolds number 
$\text{Re} = UL/\nu$, where $U$ is a reference velocity, $L$ is a reference 
lengthscale, and $\nu$ is the kinematic viscosity; and $\mathbf{f}(t)$ is the 
forcing term.

Following the notation of \cite{Trias2014}, after spatial discretization with
a finite-volume method, the incompressible
Navier-Stokes equations can be expressed in a semi-discrete formulation as
\begin{subequations}
  \begin{align}
    M\mathbf{u} &= \mathbf{0}, \label{eq:ns-a}\\
    \Omega\frac{d\mathbf{u}}{dt} + C(\mathbf{u})\mathbf{u}
    - D\mathbf{u} + \Omega G\mathbf{p} - \Omega\mathbf f &= \mathbf{0}, \label{eq:ns-b}
  \end{align}
  \label{eq:ns}
\end{subequations}
\noindent where $C$ is the convective operator, $D$ is the diffusive operator,
$G$ is the gradient operator, $M$ is the divergence operator, $\mathbf f$ is the discrete forcing term, and $\Omega$ is a diagonal matrix with
the volumes of the cells as entries; $\mathbf{u}=\mathbf u(t), \mathbf{p}=\mathbf p(t)$ are the velocity and
kinematic pressure.  Note that this formulation can be easily adapted to methods that are not
the finite-volume method. The exact form of these operators depends on the details of the chosen discretization.
Within the framework of model order reduction, Eqs. \eqref{eq:ns-a}-\eqref{eq:ns-b} are
identified as the FOM.

Eqs. \eqref{eq:ns-a}-\eqref{eq:ns-b} represent a system of differential-algebraic equations (DAE) of index 2, as the incompressibility condition is an algebraic constraint. Assuming no contribution from boundary conditions, this
system reads as follows,
\begin{subequations}
  \begin{align}
    M\mathbf{u} &= 0, \\
    \frac{d\mathbf{u}}{dt} &= \tilde{F}(\mathbf{u},t) - G\mathbf{p},
  \end{align}
  \label{eq:ns_dae}
\end{subequations}
\noindent where $\tilde{F}(\mathbf{u},t) = \Omega^{-1}(-C(\mathbf{u})+D)\mathbf{u} + \mathbf f$.

From this point onwards, the discretization of the operators is assumed to be
performed in a symmetry-preserving manner for the sake of simplicity. Symmetry-preserving schemes
\cite{Verstappen2003, Trias2014} are a family of schemes that preserve the symmetry properties of the 
continuous operators at the discrete level. This is usually achieved by ensuring that the diffusive operator is discretized as a symmetric matrix and corresponds to the composition of the gradient and divergence operators, resulting in a negative semi-definite matrix, the convective operator corresponds to a skew-symmetric matrix, and the discrete gradient and divergence operators are dual to each other, i.e., $-\Omega M^T = G$.  In physical terms, this duality ensures that the pressure-gradient work and the velocity divergence cancel exactly in the global energy balance, preventing artificial energy sources or sinks in the discrete system {\cite{Verstappen2003}}.

\subsection{Linear stability analysis for the FOM}
To integrate Eq.\eqref{eq:ns} in time, explicit one-step methods, such as any method of the
explicit Runge-Kutta (ERK) family applied to projection methods \cite{chorin_numerical_1967}, are considered. For further details 
on how this integration is performed in the context of the incompressible Navier-Stokes equations, the reader is
referred to \cite{sanderse_accuracy_2012}. A similar methodology
for explicit multi-step methods, such as the second-order Adams-Bashforth (AB2), is developed in \cite{trias_self-adaptive_2011}.

For a general ERK applied to Eq.\eqref{eq:ns}, the methodology can be contracted to
\begin{equation}
\mathbf{u}^{n+1} \approx R(\Delta t F)\mathbf{u}^n,
\label{eq:aug_fn}
\end{equation}
\noindent where $F=\Omega^{-1}(D-C(\mathbf{u}))\in\mathbb{R}^{m\times m}$ is the operator arising from linearizing $\tilde{F}(\mathbf{u})$ in Eq.\eqref{eq:ns_dae} via Picard linearization, and $R(z)$ is the stability (or augmentation) function of the given
(linearized) scheme. Note that there is an approximation symbol instead of an equal symbol as the $n+1$ value requires projecting every stage in the set of incompressible velocity fields \cite{sanderse_accuracy_2012,agdestein_discretize_2025}, which is left out in Eq.\eqref{eq:aug_fn}.
However, the projection step in a projection method is implicit and does not affect the stability region in a zero-divergence framework.

Following linear stability analysis theory for 
differential equations, the linear method is stable \cite{hairer_solving_1996} if 
\begin{equation}
  ||R(\Delta tF)||_2 \leq 1.
\end{equation}
\noindent As explained in \cite{hairer_solving_1996}, this requirement can be equivalently expressed on the largest in magnitude eigenvalue $\lambda_F$ of $F$,
\begin{equation} 
|R(\Delta t\lambda_F)| \leq 1.
\end{equation}

The maximum stable timestep for a Runge-Kutta integration is such that, for $z_{max}=\Delta t||\lambda_F||_2$, it fulfills the equation $||R(z_{max})||_2=1$. By doing so, the eigenvalue $\Delta t\lambda_F$ is placed over the stability region (Figure \ref{fig:bendixson}). $z_{max}$ is obtained by solving $||R(z)||_2-1=0$ using a bisection method along the axis defined by the origin of coordinates ($0+0i)$ and $\lambda_F$ every timestep. Hence, the self-adaptive timestep can be computed as
\begin{equation}
\Delta t = \frac{z_{max}}{||\lambda_F||_2}.
  \label{eq:dt}
\end{equation}

Thus, in general, the goal of any self-adaptive timestepping method is to calculate (or estimate) the eigenbounds of $F$ so that the maximum stable timestep is computed using Eq.\eqref{eq:dt}.

While the incompressible Navier-Stokes equations are inherently nonlinear, the ultimate goal of this analysis is to ensure numerical stability. This approach is justified by the fact that numerical stability is fundamentally a local phenomenon. By performing a Taylor expansion of the nonlinear spatial operator around the current state $\mathbf u^n$, $\dot{\mathbf u}\approx F(\mathbf u^n)+\frac{\partial F}{\partial \mathbf u}(\mathbf u_n)(\mathbf u-\mathbf u^n)$, it is evident that the error growth is dictated by the eigenvalues of the linearized operator rather than the global nonlinear system. This justifies that these methods are restricted to linear stability analysis instead of introducing nonlinear methods.

The methodology explained in this subsection assumes the discretization is symmetry-preserving. The reader is referred to Appendix B of
\cite{trias_efficient_2024} to generalize the method to
non-symmetry-preserving schemes, such as upwinding schemes, by splitting the
convective operator in its symmetric and skew-symmetric contributions. As explained in Section \ref{sec:fom}, the symmetry-preserving discretization ensures that certain property from the continuous operators are preserved \cite{Verstappen2003}: a symmetry-preserving diffusive operator is represented
by a real semi-negative definite symmetric matrix, and the convective operator is represented by
a real skew-symmetric matrix \cite{Verstappen2003, Trias2014}. Hence, the eigenvalues of the former lie on the negative real axis in the complex plane, while the eigenvalues of the latter lie on the imaginary axis.
As eigenbounds are subadditive, i.e. $\rho(A+B)\leq\rho(A)+\rho(B)$, where $\rho(A)$ determines the eigenbounds of the matrix $A$, the
eigenbound of the operator $F$, $\lambda_F$, lies, according to Bendixson's inequality 
\cite{bendixson_sur_1902}, in the rectangle defined by the eigenbounds of the convective and diffusive operators as
shown in Figure \ref{fig:bendixson}. 

\begin{figure}[h]
  \centering
  \begin{tikzpicture}
    \draw[->] (-3,0) -- (3,0) node[right] {Re};
    \draw[->] (0,-3) -- (0,3) node[above] {Im};
    \fill[red!70,fill opacity=0.3] (-2.15,-1.8) rectangle (0,1.8);
    \draw (-2.15,-1.8) rectangle (0,1.8);
    \fill[gray, fill opacity=0.5] (-1.2,-1) rectangle (0,1);
    \draw (-1.2,-1) rectangle (0,1);
    \node (c) at (-1.2, -1.25) {$I$}; 
    \node (c) at (0.3,1) {$II$}; 
    \node (c) at (0.3,-1) {$III$}; 
    \draw[red,->,ultra thick] (0,0) -- (-2.15,1.8) node[above] {\hspace{-0.65in}$z_{max}=\lambda_F\Delta t$};
    \draw[->,thick] (0,0) -- (-1.2,1) node [midway, above] {\hspace{0.1in} $\lambda_F$}; 

\draw[blue, thick, smooth] 
    plot coordinates {
    (-3.015384615384615, 0)
    (-2.993846153846154, 0.26526315789473683)
    (-2.943589743589743, 0.48631578947368415)
    (-2.886153846153846, 0.6568421052631579)
    (-2.807179486179487, 0.8273684210526315)
    (-2.7210256410256413, 0.991578947368421)
    (-2.6492307692307697, 1.0989473684210526)
    (-2.5199999999999996, 1.2947368421052632)
    (-2.369230769230769, 1.4968421052631578)
    (-2.1825637021821025, 1.7615789473684215)
    (-2.0389743589743588, 1.9768421052631578)
    (-1.8882051282051282, 2.2105263157894736)
    (-1.723076923076923, 2.419)
    (-1.4933333333333333, 2.6399999999999997)
    (-1.3282051282051278, 2.7473684210526313)
    (-1.0912820512820509, 2.829473684210526)
    (-0.8902564102564098, 2.854736842105263)
    (-0.7107692307692305, 2.8357894736842105)
    (-0.5528205128205125, 2.7852631578947367)
    (-0.4307692307692303, 2.7221052631578947)
    (-0.287179486179587, 2.6084210526315787)
    (-0.1794871794871794, 2.482105263157895)
    (-0.06461538461538438, 2.286315789473684)
    (-0.007179487179486621, 2.121578947368421)
    (0.0358974358975143, 1.8757894736842104)
    (0.05025641025641061, 1.711578947368421)
    (0.05025641025641061, 1.534736842105263)
    (0.04307692307692386, 1.3326315789473683)
    (0.0358974358975143, 1.073684210526316)
    (0.02871794871741858, 0.8463157894736842)
    (0.021538461538461996, 0.6447368421052632)
    (0.01435853435817484, 0.35368421052631574)
    (0.007179487179487687, 0.20210526315789474)
    (0, 0)
    };
\draw[blue, thick, smooth] 
    plot coordinates {
    (-3.015384615384615, 0)
    (-2.993846153846154, -0.26526315789473683)
    (-2.943589743589743, -0.48631578947368415)
    (-2.886153846153846, -0.6568421052631579)
    (-2.807179486179487, -0.8273684210526315)
    (-2.7210256410256413, -0.991578947368421)
    (-2.6492307692307697, -1.0989473684210526)
    (-2.5199999999999996, -1.2947368421052632)
    (-2.369230769230769, -1.4968421052631578)
    (-2.1825637021821025, -1.7615789473684215)
    (-2.0389743589743588, -1.9768421052631578)
    (-1.8882051282051282, -2.2105263157894736)
    (-1.723076923076923, -2.419)
    (-1.4933333333333333, -2.6399999999999997)
    (-1.3282051282051278, -2.7473684210526313)
    (-1.0912820512820509, -2.829473684210526)
    (-0.8902564102564098, -2.854736842105263)
    (-0.7107692307692305, -2.8357894736842105)
    (-0.5528205128205125, -2.7852631578947367)
    (-0.4307692307692303, -2.7221052631578947)
    (-0.287179486179587, -2.6084210526315787)
    (-0.1794871794871794, -2.482105263157895)
    (-0.06461538461538438, -2.286315789473684)
    (-0.007179487179486621, -2.121578947368421)
    (0.0358974358975143, -1.8757894736842104)
    (0.05025641025641061, -1.711578947368421)
    (0.05025641025641061, -1.534736842105263)
    (0.04307692307692386, -1.3326315789473683)
    (0.0358974358975143, -1.073684210526316)
    (0.02871794871741858, -0.8463157894736842)
    (0.021538461538461996, -0.6447368421052632)
    (0.01435853435817484, -0.35368421052631574)
    (0.007179487179487687, -0.20210526315789474)
    (0, 0)
    };

  \end{tikzpicture}
  \caption{Schematic illustration of Bendixson's rectangle (gray) \cite{trias_self-adaptive_2011} for the eigenvalues of $F$. $I: \rho(\Omega^{-1}D)$, $II:\rho(\Omega^{-1}C(\mathbf u))$, $III: -\rho(\Omega^{-1}C(\mathbf u))$. The red rectangle represents the stability region bounds. The blue curve represents the boundary of the stability region for 
  a third-order Runge-Kutta scheme.}
  \label{fig:bendixson}
\end{figure}
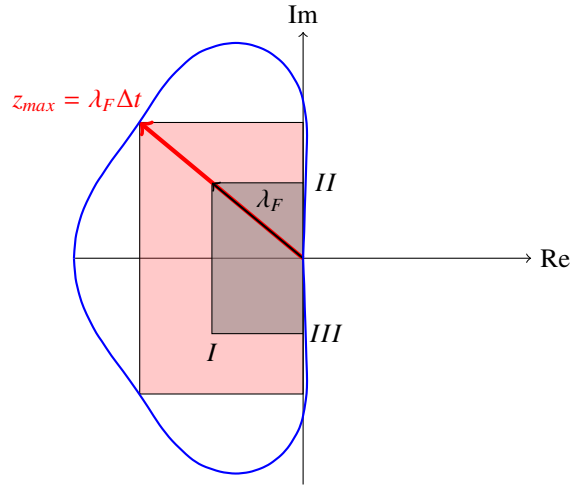

The stability region is determined by the timestepping scheme. For ERK schemes defined by the coefficients $A=[a_{ij}]_{i,j=1:s}$ and $\mathbf{b}
= (b_1~b_2~\dots~b_s)^T$, with $s$ stages \cite{kraaijevanger_contractivity_1991}, if $s\leq 4$ and the
order of accuracy $p=s$, the stability function is the Taylor polynomial of $e^z$ up to the number of stages \cite{hairer_solving_1993},
\begin{equation}
  R(z) = 1 + \sum_{j=1}^s{\frac{1}{j!}z^j},
\end{equation}
\noindent which makes the implementation much easier to test. Moreover, the
most widely used RK schemes: Heun's 2nd- and 3rd-order schemes, Kutta's 3rd
order scheme, the classic 4th-order scheme; satisfy this condition of
$s=p\leq 4$.

\subsection{Existing self-adaptive timestepping techniques for the FOM}
\label{sec:existing}

As previously detailed, the goal of a self-adaptive timestepping technique is determining the eigenbounds of the operators as accurately as possible, given that the exact computation of the eigenvalues is prohibitive for a classical FOM problem size, which
can reach up to several billion unknowns.

\texttt{EigenCD} \cite{trias_self-adaptive_2011} estimates the eigenbounds by making use
of the Gershgorin circle theorem \cite{gerschgorin_uber_1931}, which is relatively inexpensive compared to
the exact computation of the eigenvalues, while simultaneously providing a relatively accurate bound so that it
allows having a much larger timestep than the original CFL condition \cite{courant_uber_1928}. To do so, the method relies on the explicit construction of the convective and diffusive operators at every timestep, which allows the application of the Gershgorin circle theorem to both matrices to compute the eigenbounds of the operators.

In contrast to \texttt{EigenCD}, \texttt{AlgEigCD} \cite{trias_efficient_2024} exploits the structure of the convective and diffusive operators to estimate the eigenbounds without explicitly constructing these operators. On a collocated grid, the duality
of the gradient and the divergence, and the definition of the interpolation of the velocity at the faces for a symmetry-preserving scheme allows the (offline) computation of a matrix depending only on the incidence matrices as well as $\Omega$. Therefore, the eigenbounds are computed online by simply performing a matrix-vector product of this offline-computed matrix with the diffusive and convective fluxes.
This construction, opposite to the one from \texttt{EigenCD}, allows simplifying the estimation of
the eigenbound to a sparse matrix-vector product, where the sparse matrix has been computed \textit{a priori}. This allows making the eigenbound estimation much more efficient.

The main question in this article is whether the \texttt{EigenCD}/\texttt{AlgEigCD} method can also be applied in a reduced-order model context. For this purpose, the reduced-order model framework is introduced in the following subsection.

\subsection{Energy-conserving POD-Galerkin method} 
\label{sec:rom}

The ROM considered in this work follows the ODE-based projection approach 
\cite{benner_survey_2015}, where the PDE is first discretized in space, as in
Eq.\ \eqref{eq:ns}, and afterwards is projected to the subspace obtained with
POD \cite{sanderse_non-linearly_2020}.
Let the velocity vector be defined as $\mathbf{u}\in\mathbb{R}^{N_V}$, and the POD-Galerkin approximation is obtained by the Ansatz that the velocity field
can be approximated by
\begin{equation}
\mathbf{u}_c(t) \approx \mathbf{u}_r(t) \equiv \Phi\mathbf{a}(t),
\label{eq:ansatz}
\end{equation}
where $\Phi\in\mathbb{R}^{N_V\times M}$ is the projection matrix, and
$\mathbf{a}(t)\in\mathbb{R}^M$ is the coefficient vector; assuming $M \ll
N_V$. In this work, the basis is obtained from the
so-called snapshot matrix $X$ which contains $K$ snapshots of the velocity
field, $X = [\mathbf{u}_c^1 \dots
\mathbf{u}_c^n\dots\mathbf{u}_c^K]\in\mathbb{R}^{N_V\times K}$, obtained from
a (so-called `offline') solution of the FOM from Eq.\ \eqref{eq:ns}.

The construction of the POD basis is performed using a weighted
orthonormality condition, $\Phi^T\Omega\Phi = I_M$, where $I_M$ is the identity
matrix of size $M$, which is consistent with the inner product of the velocity
as well as the ROM kinetic energy equation \cite{sanderse_non-linearly_2020}.
As \texttt{AlgEigCD} provides an efficient way to integrate the FOM using variable timesteps, it is important to construct the POD basis by considering that there is
a variable timestep \cite{kunisch_optimal_2010}. This is taken into account in the construction of the POD basis as described in \ref{sec:pod}.

Given $\Phi$, and the Ansatz, both mass and momentum equations from Eq.\
\eqref{eq:ns} can be projected \cite{rosenberger_no_2023} to obtain the reduced-order
equations. Concerning the conservation of mass, this is preserved automatically if the
snapshots are divergence free ($MX_j=0$), regardless of $\mathbf{a}$.

First, consider a case with homogeneous boundary conditions. Concerning
the conservation of momentum, the ROM momentum equation is given by
\begin{equation}
\frac{d\mathbf{a}}{dt}
= -\Phi^TC(\Phi\mathbf{a}_c)\Phi\mathbf{a}+\Phi^TD\Phi\mathbf{a},
\label{eq:rom_inc}
\end{equation}
\noindent where the weighted orthonormality condition has been applied.  $\mathbf{a}$ stands for the convected velocity in the reduced space while $\mathbf{a}_c=\mathbf a$ stands for the convecting velocity in the reduced space. While physically identical in the momentum equation, this notation highlights the functional roles as the state vector and the linearization point, respectively. This separation ensures the framework remains generalizable to non-autonomous transport equations (see \ref{sec:jacobian}), where the convecting field may be a prescribed velocity. Applying the duality of the divergence and gradient operators inherent in
a symmetry-preserving discretization, the pressure term is exactly zero, and can
be dropped if the snapshots are divergence-free
\cite{sanderse_non-linearly_2020}, as shown by
\begin{equation}
    \Phi^TG\mathbf p = -\Phi^T\Omega M^T\mathbf p = (M\Omega\Phi)^T\mathbf p = 0.
\end{equation}

In the current setup, building the reduced convective operator in an online phase would rely
on operations at the FOM level, which
would notably deteriorate the performance of the ROM. Following
\citet{stabile_finite_2018}, the reduced convective term can be rewritten as
\begin{equation}
\Phi^TC(\Phi\mathbf{a}_c)\Phi\mathbf{a} \equiv
[\underbrace{\Phi^TC(\Phi_1)\Phi}_{C_{r,1}}~C_{r,2}~\dots~C_{r,M}](\mathbf{a}_c\otimes\mathbf{a})
  = C_r(\mathbf{a}_c\otimes\mathbf{a}) = \left(\sum_{i=1}^M{a_{c,i}C_{r,i}}\right)\mathbf{a}.
  \label{eq:red_conv}
\end{equation}
\noindent With this approach, all the reduced convective matrices $C_{r,i}$ can be computed
offline in a setup phase that keeps the complexity of online operations at
ROM level, while all FOM-size operations are performed offline (only once).

Hence, the ROM derived in Eq.\eqref{eq:rom_inc} can be written as
\begin{equation}
\frac{d\mathbf{a}}{dt}
  = -C_r(\mathbf{a}_c\otimes\mathbf{a})+D_r\mathbf{a}+F_{r,0},
  \label{eq:rom_final}
\end{equation}
\noindent which has a complexity of $\mathcal{O}(M^3)$ \cite{ahmed_closures_2021}. In many cases, the required number of modes $M$ is low enough so that performing operations of this complexity is acceptable.
For larger $M$, hyperreduction techniques such as the discrete empirical interpolation method might be used \cite{klein_energy-conserving_2024,farhat_structure-preserving_2015,barrault_empirical_2004} to reduce the complexity to $\mathcal O(M^2)$. While hyperreduction specifically targets this complexity reduction, for flows at high Reynolds numbers, closure models, e.g., {\cite{san2018extreme}}, can also be employed. These models ensure stability at very low values of M by modeling the dissipative effects of truncated scales, offering a complementary approach to maintain ROM feasibility.

In the case of non-homogeneous boundary conditions, a linear term arising from the convective term
interaction with the boundary conditions should be added to Eq.\eqref{eq:rom_final} \cite{rosenberger_no_2023}.
To consider the presence of non-homogeneous boundary conditions, the contribution from these non-homogenenous boundary conditions ($\mathbf u_\mathrm{inhom}$) has to be added to the ansatz. Therefore,
\begin{equation}
    \mathbf{u}_r(t) = \Phi\mathbf{a}(t)+\mathbf{u}_\mathrm{inhom}(t),
    \label{eq:inhom_ansatz}
\end{equation}
\noindent where $\mathbf{u}_\mathrm{inhom}$ is the lifting function defined by \citet{rosenberger_no_2023}. As explained in Appendix B in \cite{sanderse_non-linearly_2020}, the Navier-Stokes equations with inhomogeneous boundary conditions are given by
\begin{align}
    M\mathbf{u} &= \mathbf{y}_M \\
    \Omega\frac{d\mathbf{u}}{dt} &= -K((\Pi\mathbf{u}+\mathbf{y}_\Pi)\circ (A\mathbf{u}+\mathbf{y}_A)) + (D\mathbf{u}+\mathbf{y}_D) - (\Omega G\mathbf{p} + \mathbf{y}_G) + \mathbf{f},
    \label{eq:mom eq with inhom bc}
\end{align}
\noindent where $\Pi$ interpolates the fluxes to the staggered faces, and $A$ interpolates the velocities to the staggered faces. Figure {\ref{fig:stagg}} represents a staggered grid, where the velocities are calculated at the faces of the collocated grid (gray), and the staggered faces are represented in blue for the $x$-velocity component and in red for the $y$-velocity component.

\begin{figure}[h]
    \centering
\begin{tikzpicture}[scale=3, >=stealth]
    \draw[step=1cm, gray, ultra thick] (0,0) grid (2,2);
    \draw[blue,dashed] (0.5,0) rectangle (1.5,2);
    \draw[blue,dashed] (0,0) -- (2,0);
    \draw[blue,dashed] (0,1) -- (2,1);
    \draw[blue,dashed] (0,2) -- (2,2);

    \draw[red,dashed] (0,0.5) rectangle (2,1.5);
    \draw[red,dashed] (0,0) -- (0,2);
    \draw[red,dashed] (1,0) -- (1,2);
    \draw[red,dashed] (2,0) -- (2,2);

    \tikzset{
        u_vel/.style={->, blue, thick},
        v_vel/.style={->, red, thick},
        p_node/.style={fill=black, circle, inner sep=1pt}
    }

    \foreach \x in {0.5, 1.5}
        \foreach \y in {0.5, 1.5}
            \node[p_node, label={below right:$p$}] at (\x,\y) {};

    \foreach \x in {0, 1, 2}
        \foreach \y in {0.5, 1.5}
            \draw[u_vel] (\x-0.2, \y) -- (\x+0.2, \y) node[above] {$u$};

    \foreach \x in {0.5, 1.5}
        \foreach \y in {0, 1, 2}
            \draw[v_vel] (\x, \y-0.2) -- (\x, \y+0.2) node[right] {$v$};

\end{tikzpicture}
\caption{Representation of a two-dimensional staggered grid, where the collocated grid is depicted in gray, the $x$-staggered grid in blue, and the $y$-staggered grid in red.}
\label{fig:stagg}
\end{figure}

As described in \cite{rosenberger_advances_2021}, the boundary contributions $\mathbf{y}_*, \, *\in\{A,D,\Pi,G,M\}$ can be expressed as linear functions of a vector $\mathbf{y}_{bc}$ that fully describes the prescribed boundary conditions, $\mathbf{y}_* = B_* \mathbf{y}_{bc}$. The boundary condition vector is approximated as $\mathbf{y}_{bc}(t)\approx \Phi_{bc} \mathbf{a}_{bc}(t)$, where $\mathbf{a}_{bc}\in\mathbb{R}^{M_{bc}}$, and $M_{bc}$ being the number of modes used for the boundary conditions. Inserting this approximation and the Ansatz \eqref{eq:inhom_ansatz} with $\mathbf{u}_\mathrm{inhom}(t) = F_\mathrm{inhom}\mathbf{a}_{bc}(t)$ into \eqref{eq:mom eq with inhom bc} (projected onto $\Phi$), leads to
\begin{align}
    \frac{d\mathbf{a}}{dt} = \Phi^T\Big[&
    -K((\Pi\Phi \mathbf{a}_c + (\Pi B_\mathrm{inhom} + B_\Pi\Phi_{bc})\mathbf{a}_{bc})\circ(A\Phi \mathbf{a} + (AB_\mathrm{inhom} + B_A\Phi_{bc})\mathbf{a}_{bc})) \nonumber \\
    &+ (D\Phi \mathbf{a} + (DB_\mathrm{inhom}+B_D\Phi_{bc})\mathbf{a}_{bc}) - B_G\Phi_{bc} \mathbf{a}_{bc} + \mathbf{f}_c
    \Big].
\end{align}
Here, $\Phi^T G_c \mathbf{p}_c = 0$ has already been eliminated.
The convection operator can be decomposed into a term quadratic in $\mathbf a$,
\begin{align}
    -\Phi^T K((\Pi\Phi \mathbf{a}_c)\circ(A\Phi \mathbf{a} )) =: C_\mathrm{hom}(\mathbf{a}_c\otimes \mathbf{a}),
    \label{eq:chom_ac_a}
\end{align}
a term linear in both, $\mathbf{a}$ and $\mathbf{a}_{bc}$,
\begin{align}
    -\Phi^T K((\Pi\Phi \mathbf{a} )\circ((AB_\mathrm{inhom} + B_A\Phi_{bc})\mathbf{a}_{bc}))
    -K(((\Pi B_\mathrm{inhom} + B_\Pi\Phi_{bc})\mathbf{a}_{bc})\circ(A\Phi \mathbf{a})) =: C_\mathrm{hom,bc}(\mathbf{a}\otimes \mathbf{a}_{bc}),
    \label{eq:chombc_a_abc}
\end{align}
and a term quadratic in $\mathbf{a}_{bc}$,
\begin{align}
    -\Phi^T K(((\Pi B_\mathrm{inhom} + B_\Pi\Phi_{bc})\mathbf{a}_{bc})\circ( (AB_\mathrm{inhom} + B_A\Phi_{bc})\mathbf{a}_{bc})) =: C_{bc}(\mathbf{a}_{bc}\otimes \mathbf{a}_{bc}).
    \label{eq:cbc_abc_2}
\end{align}
Therefore, with the additional terms from Eqs.\eqref{eq:chom_ac_a}-\eqref{eq:cbc_abc_2}, the ROM reads
\begin{equation}
    \frac{d \mathbf a}{dt} = -C_\mathrm{hom}(\mathbf a_c\otimes\mathbf a) - C_\mathrm{hom,bc}(\mathbf a \otimes \mathbf a_{bc}) - C_{bc}(\mathbf a_{bc}\otimes\mathbf a_{bc}) + D_r\mathbf a + F_{r,0}.
    \label{eq:rom_nonhom}
\end{equation}
Therefore, considering Eq. \eqref{eq:rom_nonhom}, the linear contribution from the convective term in $\mathbf a$ is
\begin{equation}
  C_l(t) = \frac{\partial C_\mathrm{hom,bc}(\mathbf a \otimes \mathbf a_{bc})}{\partial\mathbf a}  = C_\mathrm{hom,bc}(I_{M}\otimes\mathbf{a}_{bc}(t)) = [C_{\mathrm{hom,bc},1}~\dots~C_{\mathrm{hom,bc},M}] (I_{M_{bc}}\otimes\mathbf{a}_{bc}(t)) = \sum_{i=1}^{M_{bc}}{a_{bc,i}(t)C_{l,i}},
  \label{eq:cl}
\end{equation}
\noindent where $C_\mathrm{hom,bc}\in\mathbb{R}^{M\times MM_{bc}} = [C_{\mathrm{hom,bc},1} \dots C_{\mathrm{hom,bc},M}]$ is the matrix containing
the reduced contribution of the boundary conditions to the convective term, and 
$\mathbf{a}_{bc}(t)\in\mathbb{R}^{M_{bc}}$ are the coefficients of the boundary condition at
time $t$ \cite{rosenberger_no_2023}.


\section{Generalizing the self-adaptive timestep methods for ROMs}
\label{sec:redeig}

\subsection{Estimating the eigenbound of the reduced operators}

As for FOMs discussed in Section \ref{sec:existing}, the crucial element of self-adaptive timestepping methods for ROMs is the computation of eigenbounds. Since the convection operator depends on the current velocity, these eigenbounds must be computed for every timestep. 
To preserve the efficiency gain achieved by the adaptive timestepping, the computational costs of this eigenbound computation should be negligible compared to the overall costs of the numerical simulation. As described in Section \ref{sec:rom}, the simulation of each timestep has computational costs of $\mathcal O(M^3)$ if the convection term is computed exactly, and of $\mathcal O(M^2)$ if hyperreduction is used.
Although the present work focuses on the standard projection without hyperreduction (in order to ensure exact energy conservation and stability properties), we keep the requirement that the computational cost for computing the eigenbounds at each time step should be at most $\mathcal O(M^2)$.

The exact computation of the eigenvalues of a matrix in $\mathbb{R}^{M\times M}$ has computational costs of $\mathcal O(M^3)$ \cite{wilkinson_qr_1965,corbato_coding_1963,trefethen_numerical_1997} which is too expensive for practical purposes.
As an alternative, Gershgorin's circle theorem can be applied to the reduced operators as in \texttt{EigenCD} \cite{trias_self-adaptive_2011} to estimate the eigenbounds. This approach has costs in $\mathcal O(M^2)$ so still might contribute significantly to the overall costs. Moreover, this estimate can be quite inaccurate and hence result in timesteps significantly smaller than the largest stable timestep based on the exact eigenbounds.
Another alternative is \texttt{AlgEigCD} \cite{trias_efficient_2024}, which was proposed on the FOM level and leverages the structure of the convective and diffusive operators. A natural question that we will address now is whether this method can be transferred to the ROM level.

The concept behind \texttt{AlgEigCD} is approximating the eigenbounds of the operators by finding a factorization $H=AB$ such that the eigenbound of $A^TB^T$ can be approximated efficiently. The reduced operators, however, have the form $H=\Phi^TH_F$, where $H_F$ is a matrix with FOM dimensions. Following \texttt{AlgEigCD}, the eigenbound of a matrix $H$ would be obtained by approximating the eigenbounds of $H^*=(\Phi^TA_F)^T(B_F\Phi)^T$, where $A_FB_F$ is a factorization of $H_F$. In contrast to $H$, $H^*$ is a matrix of FOM dimension, therefore the eigenbound estimation for $H^*$ is more expensive than for $H$ itself. Thus, such a transfer does not result in an efficient method. Instead, we will propose a new approach that exploits the structure of the reduced operators \eqref{eq:red_conv} and \eqref{eq:cl}.

Concerning the diffusive operator, the application of the exact
calculation of the eigenvalues is straightforward, as the diffusive operator at
the ROM level is built as $\Phi^TD\Phi$, and it is a real symmetric
matrix. Thus, its eigenvalues are real, and the eigenbound
$\rho(D_r)$ can be computed exactly in an offline stage.
On the other hand, the convective operator at the ROM level is built as in Eq.
\eqref{eq:red_conv}. Simplifying the momentum equation as a particular case of a convection-diffusion equation with predefined convecting velocity, the Jacobian of the convective operator with respect to the convected velocity coefficients reads as
\begin{equation}
  J_C = \frac{\partial C_r}{\partial \mathbf{a}}= 
  \sum_{i=1}^M{a_{c,i}C_{r,i}},
\end{equation}
\noindent which yields a skew-symmetric Jacobian iff $C_{r,i}$ is skew-symmetric for all $i$. \ref{sec:jacobian} shows the computation of the Jacobian of the convective term for a general convection-diffusion equation. Thus, the linearized form that is used for the linear stability analysis corresponds to
\begin{equation}
  \sum_{i=1}^M{a_{c,i}C_{r,i}}.
\end{equation}

Note that at every timestep
$\mathbf{a}_c$ changes, yet $C_{r,i}$ does not as it only depends on
$\Phi_i$. Thus, the eigenbound of the reduced convective operator can be estimated by employing the property that eigenbounds are subadditive. Hence, the eigenbound of the reduced convective operator can be bound as
\begin{equation}
  \rho(C_r) \leq \sum_{i=1}^M{|a_{c,i}|\rho(C_{r,i})},
  \label{eq:ebound_Cr}
\end{equation}
\noindent where $\rho(C_{r,i})$ is calculated exactly in an offline
stage, as its complexity of $\mathcal{O}(M^3)$ can be tolerated. However, the online complexity to compute Eq.\eqref{eq:ebound_Cr} is just $\mathcal O (M)$, as it corresponds to a dot product. This complexity is negligible compared to the $\mathcal O(M^3)$ cost of evaluating the ROM every timestep (Eq.\eqref{eq:rom_final}). 

Even though Eq.\eqref{eq:ebound_Cr} only holds for skew-symmetric convective operators, it can be generalized to cases that do not fulfill this condition, i.e., upwinding schemes, by splitting each $C_{r,i}$ into its symmetric and skew-symmetric contributions similarly to Eq.\eqref{eq:split}, and therefore applying the same exact method for the symmetric (real) and skew-symmetric (imaginary) contributions.

In case of non-homogeneous boundary conditions, the ROM additionally contains a linear convective term which is in general neither skew-symmetric nor symmetric \cite{rosenberger_no_2023}. Given that this term
is time-dependent in case of time-dependent boundary conditions, its eigenvalues
should be estimated at every timestep.
As the non-homogeneous boundary conditions contributions are neither symmetric nor skew-symmetric in general, they are split into their symmetric and skew-symmetric contributions, i.e.,
\begin{subequations}
    \begin{align}
        C_{l,i}^S &= \frac{1}{2}(C_{l,i}+C_{l,i}^T), \\
        C_{l,i}^\Omega &= \frac{1}{2}(C_{l,i}-C_{l,i}^T).
    \end{align}
    \label{eq:split}
\end{subequations}
\noindent Therefore, the symmetric term contributes to the real bound, while the skew-symmetric term affects the imaginary bound.

The linear contribution (Eq.\eqref{eq:cl}) has the same structure as the convective operator (Eq.\eqref{eq:red_conv}. Therefore, the same approach can be used to estimate the eigenbounds for the symmetric and skew-symmetric terms, which leads to
\begin{subequations}
    \begin{align}
        \rho(C_l^S(t)) &\leq \sum_{i=1}^{M_{bc}}{|a_{bc,i}(t)|\rho(C_{l,i}^S)}, \\
        \rho(C_l^\Omega(t)) &\leq \sum_{i=1}^{M_{bc}}{|a_{bc,i}(t)|\rho(C_{l,i}^\Omega)},
    \end{align}
\end{subequations}
\noindent where $\rho(C_{l,i}^S)$ and $\rho(C_{l,i}^\Omega)$ are computed exactly in an offline stage, and $\mathbf{a}_{bc}(t)$ is already known at every timestep. Similarly to Eq.\eqref{eq:ebound_Cr}, the online complexity of these estimations is $\mathcal O(M_{bc})$. 

Hence, this new approach to estimate the eigenbounds used to determine the largest stable timestep is
determined by
\begin{subequations}
  \begin{align}
    \Re(\lambda_F) &\leq \tilde\Re(\lambda_F) :=  \rho(D_r) + \sum_{i=1}^{M_{bc}}{|a_{bc,i}(t)|\rho(C_{l,i}^S)}, \\
    \Im(\lambda_F) &\leq \tilde\Im(\lambda_F) :=\sum_{i=1}^M{|a_{c,i}|\rho(C_{r,i})}+\sum_{i=1}^{M_{bc}}{|a_{bc,i}(t)|\rho(C_{l,i}^\Omega)}.
  \end{align}
  \label{eq:ebound_F}
\end{subequations}
With all these ingredients, the stability-aware self-adaptive timestepping method for ROMs, \texttt{RedEigCD}, can be
summarized in Algorithm \ref{alg:redeigcd}, where $T$ is the simulation end time. 
\begin{algorithm}
  \caption{Stability-aware self-adaptive timestep method for ROMs (\texttt{RedEigCD})}
  \label{alg:redeigcd}
  \textbf{Offline stage:}
  \begin{algorithmic}[1]
    \State Compute $\Phi$ from snapshots.
    \State Compute $D_r$ as $\Phi^TD\Phi$.
    \State Compute $C_{r,i}$ as in Eq.\eqref{eq:red_conv} for $i=1,\dots,M$.
    \State Compute $C_{l,i}$ as in Eq.\eqref{eq:cl} for $i=1,\dots,M_{bc}$.
    \State $\star$ Compute $\rho(D_r)$ and $\rho(C_{r,i})$ for $i=1,\dots,M$.
    \State $\star$ Compute $\rho(C_{l,i}^S)$ and $\rho(C_{l,i}^\Omega)$ for $i=1,\dots,M_{bc}$.
    \end{algorithmic}
    \textbf{Online stage:}
    \begin{algorithmic}[1]
    \State Set $t=0$, $\mathbf{a}=\mathbf{a}_0$, $\Delta t=\Delta t_0$.
    \While{$t<T$}
    \State $\star$ Compute $\tilde\Re(\lambda_F)$ and $\tilde\Im(\lambda_F)$ as in Eq.\eqref{eq:ebound_F}.
    \State $\star$ Compute $z_{max}$ with the bisection method.
    \State $\star$ Compute $\Delta t$ as in Eq.\eqref{eq:dt}.
    \State Advance Eq.\eqref{eq:rom_final} in time with the chosen time integration scheme.
    \State $t = t + \Delta t$
    \EndWhile
  \end{algorithmic}
          \scriptsize\hspace{1em}$\star$ Steps required for a self-adaptive time integration with \texttt{RedEigCD}.
\end{algorithm}

Finally, the operational differences between the proposed framework and existing strategies is presented. Table \ref{tab:comparison} presents a side-by-side comparison of \texttt{EigenCD}, \texttt{AlgEigCD}, and \texttt{RedEigCD}, focusing on their algorithmic complexity and applicability to ROM simulations. 
\begin{table}[h]
\centering
\caption{Description and comparison of computational complexity (offline vs. online) for \texttt{EigenCD}, \texttt{AlgEigCD}, and the proposed \texttt{RedEigCD} method.}
\label{tab:comparison}
\begin{tabularx}{\textwidth}{l X c c} 
\toprule
\textbf{Method} & \textbf{Description} & \textbf{Online complexity} & \textbf{Offline complexity} \\
\midrule
\texttt{EigenCD} \cite{trias_self-adaptive_2011} 
    & Applies Gershgorin to the explicit $D_r,C_r(\mathbf a)$ operators. Inaccurate for dense matrices in a reduced framework.
    & $\mathcal{O} (M^2)$ 
    & $0$ \\ \addlinespace
    
\texttt{AlgEigCD} \cite{trias_efficient_2024} 
    & Relies on the structure of $D_r, C_r(\mathbf a)$ to reduce the eigenvalue computation to a matrix-vector product of FOM size. 
    & $\mathcal{O}(N_F)$ 
    & $\mathcal{O}(N_F)$ \\ \addlinespace

\texttt{RedEigCD} (proposed)
    & Uses the tensorial decomposition to simplify the eigenvalue calculation for $C_r(\mathbf a)$. Uses the exact eigenvalues for $D_r$.
    & $\mathcal{O}(M)$ 
    & $\mathcal{O}(M^3)$ \\ 
\bottomrule
\end{tabularx}
\end{table}

\subsection{Analytical stability analysis of ROMs} \label{sec:biggerdt}

In practice, it has been observed that ROMs are stable for larger timesteps than the FOM. For instance, \citet{baiges_explicit_2013} indicated that for explicit time integration schemes, the timestep was not restricted by the CFL condition, as the Courant number could be larger than unity, opposite to the FOM.
Intuitively, cutting off some of the highest (spatial) frequencies,
which are represented by the smallest singular values, would imply removing
some of the most rapid (temporal) modes. These are the modes that make the system stiffer,
and thus are the most restrictive modes in terms of stability. By eliminating these modes,
the stability constraint is more relaxed, and the method yields, as a consequence, larger timesteps in
the reduced framework.

Bach et al.\ \cite{bach_stability_2018} analyzed the stability of ROMs compared to FOMs for linear
elasticity problems, which are linear problems with real symmetric
matrices (like diffusion), where the timestep at the ROM level is demonstrated to
be at least as large as the timestep at the FOM level, yet usually larger. To provide a mathematical foundation to this qualitative point of view, Bach et al.
\cite{bach_stability_2018} made use of the Poincaré separation theorem, which reads as follows:
\begin{lemma}
(Poincaré separation theorem \cite{stewart_matrix_2001, magnus_matrix_2019}) Let $V\in\mathbb{R}^{m\times k}$, where
$k<<m$, be a matrix with orthonormal column vectors, i.e.
  $V^TV=I_k$. Let
$A\in\mathbb{R}^{m\times m}$ be a real symmetric matrix. Then, the eigenvalues
of $V^TAV$ separate the eigenvalues of $A$ such that
\begin{equation}
  \lambda_i \leq \tilde{\lambda}_i \leq \lambda_{m-k+i},~~i\leq k
\end{equation}
\noindent where $\lambda_i$ denotes the eigenvalues of $A$, and $\tilde{\lambda}_i$ the
eigenvalues of $V^TAV$, ordered in increasing
  order, $\lambda_1\leq\lambda_2\leq\dots\leq\lambda_m$ and $\tilde{\lambda}_1\leq\tilde{\lambda}_2\leq\dots\leq\tilde{\lambda}_k$ respectively.
\label{th:pst}
\end{lemma}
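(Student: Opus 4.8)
The plan is to prove the Poincaré separation theorem via the Courant--Fischer min--max characterization of the spectrum of a real symmetric matrix, which I would invoke as a standard result (see, e.g., \cite{stewart_matrix_2001, magnus_matrix_2019}). Recall that for $A$ with eigenvalues ordered as $\lambda_1 \le \dots \le \lambda_m$, each eigenvalue admits the two dual variational descriptions
\begin{equation}
\lambda_i = \min_{\dim S = i}\, \max_{0 \neq x \in S} \frac{x^T A x}{x^T x}
= \max_{\dim S = m-i+1}\, \min_{0 \neq x \in S} \frac{x^T A x}{x^T x},
\end{equation}
where $S$ ranges over subspaces of $\mathbb{R}^m$ of the indicated dimension. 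The first (min--max) form will deliver the lower bound and the second (max--min) form the upper bound.

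The central observation, and the bridge between the two matrices, is that the orthonormality condition $V^T V = I_k$ makes the map $y \mapsto V y$ a Rayleigh-quotient-preserving isometry. Indeed, for any $y \in \mathbb{R}^k$ set $x = V y$; then $x^T A x = y^T (V^T A V) y$ and $x^T x = y^T V^T V y = y^T y$, so the Rayleigh quotient of $V^T A V$ at $y$ equals that of $A$ at $V y$. Moreover, since $V$ has full column rank, it maps any $j$-dimensional subspace $T \subseteq \mathbb{R}^k$ bijectively onto a $j$-dimensional subspace $VT$ of the column space $\mathrm{col}(V) \subseteq \mathbb{R}^m$. Applying the min--max formula to $\tilde\lambda_i$ and transporting through $V$ therefore rewrites $\tilde\lambda_i$ as exactly the same min--max (resp.\ max--min) expression as for $A$, except that the optimizing subspaces are constrained to lie inside $\mathrm{col}(V)$.

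From here both inequalities follow by a monotonicity argument on the feasible set of subspaces. For the lower bound I would use the min--max form: the constrained minimum over $i$-dimensional subspaces of $\mathrm{col}(V)$ is taken over a subset of all $i$-dimensional subspaces of $\mathbb{R}^m$, and minimizing over a smaller set can only increase the value, so $\tilde\lambda_i \ge \lambda_i$. Symmetrically, for the upper bound I would use the max--min form: writing $\tilde\lambda_i$ as a maximum of minima over $(k-i+1)$-dimensional subspaces of $\mathrm{col}(V)$, and noting that maximizing over a subset can only decrease the value, one obtains $\tilde\lambda_i \le \lambda_{m-k+i}$. The index shift arises because, in the max--min description of the spectrum of $A$, a subspace of dimension $k-i+1$ corresponds to the eigenvalue $\lambda_{m-(k-i+1)+1} = \lambda_{m-k+i}$.

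The only delicate point, and the step I would verify most carefully, is precisely this dimension bookkeeping: matching the subspace dimension $k-i+1$ that appears after the change of variables with the correct eigenvalue index of $A$ in the max--min formula, which is what produces the $m-k+i$ on the right-hand side. Everything else is a direct consequence of the Rayleigh-quotient invariance under the isometry $y \mapsto V y$ together with the elementary fact that restricting the class of admissible subspaces moves a minimum up and a maximum down.
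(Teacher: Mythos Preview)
Your argument via the Courant--Fischer min--max principle is correct and is in fact the standard proof of this result; the Rayleigh-quotient invariance under $y\mapsto Vy$ and the subspace-restriction monotonicity are exactly the right ingredients, and your dimension bookkeeping for the upper bound is accurate. Note, however, that the paper does not actually prove this lemma: it is stated as a classical result with citations to \cite{stewart_matrix_2001, magnus_matrix_2019} and invoked as a black box, so there is no ``paper's own proof'' to compare against --- your proposal simply supplies the textbook argument that those references contain.
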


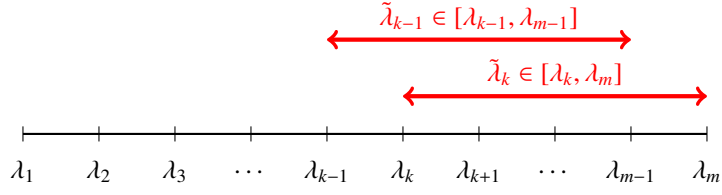
\begin{figure}[h]
  \centering
  \begin{tikzpicture}
    \draw[thick] (0,0) -- (9,0);
    \foreach \x in {0,1,2,3,4,5,6,7,8,9} {
      \draw (\x,0.1) -- (\x,-0.1);
    }
    \node (c) at (0,-0.5) {$\lambda_1$};
    \node (c) at (1,-0.5) {$\lambda_2$};
    \node (c) at (2,-0.5) {$\lambda_3$};
    \node (c) at (3,-0.5) {$\dots$};
    \node (c) at (4,-0.5) {$\lambda_{k-1}$};
    \node (c) at (5,-0.5) {$\lambda_k$};
    \node (c) at (6,-0.5) {$\lambda_{k+1}$};
    \node (c) at (7,-0.5) {$\dots$};
    \node (c) at (8,-0.5) {$\lambda_{m-1}$};
    \node (c) at (9,-0.5) {$\lambda_m$};

    \draw[<->, ultra thick, color=red] (5,0.5) -- (9,0.5) node [midway,above] {$\tilde{\lambda}_k\in[\lambda_k,\lambda_m]$};
    \draw[<->, ultra thick, color=red] (4,1.25) -- (8,1.25) node [midway,above] {$\tilde{\lambda}_{k-1}\in[\lambda_{k-1},\lambda_{m-1}]$};

  \end{tikzpicture}
  \caption{Graphical representation of the range of reduced eigenvalues for $i=k$ and $i=k-1$, in red, according
  to the Poincaré separation theorem.}
  \label{fig:pst}
\end{figure}

At the limit, $i=k$, the value $\tilde{\lambda}_k$ denotes the eigenbound 
of $V^TAV$. According to Lemma \ref{th:pst}, $\tilde{\lambda}_k$ lies in the range $[\lambda_k,\lambda_m]$. Therefore,
$\tilde{\lambda}_k\leq\lambda_m$, which determines the eigenbound of $A$. For the sake of
clarity, this has been represented graphically in Figure \ref{fig:pst}. This theorem
can be applied straightaway to the diffusive operator and shows that
$\rho(D_r) \leq \rho(D)$ as the diffusive operator corresponds to a real
symmetric matrix, and $\Phi$ is an $\Omega$-orthonormal matrix. Therefore, Lemma \ref{th:pst} holds,
as Bach et al. \cite{bach_stability_2018} showed that its result is not altered if the basis fulfills $V^T\Omega V = I_k$ instead of $V^TV=I_k$.

In contrast, Lemma \ref{th:pst} cannot be applied to the convective term as it is not symmetric. Therefore, the work of Bach et al. \cite{bach_stability_2018} is extended to the case of ROMs with non-symmetric operators. To do so, the work of Rao \cite{rao_separation_1979} is employed, which extends the Poincaré separation theorem to singular values instead of eigenvalues, and thus holds even for the case of non-symmetric matrices.

\begin{lemma}
(Poincaré separation theorem for singular values \cite{rao_separation_1979}) Let
$A\in\mathbb{R}^{m\times n}$. Let $B\in\mathbb{R}^{m\times k}$ and
$C\in\mathbb{R}^{n\times r}$ be orthonormal matrices, i.e. $B^TB=I_k$ and
$C^TC=I_r$. The $i$-th singular value of $A$ is denoted by $\sigma_i$ in
increasing order, as for the Poincaré separation theorem. Moreover,
$\tilde{\sigma}_i$ denotes the $i$-th singular value of $B^TAC$. Then, the following inequality holds:
\begin{equation}
  \sigma_i \leq \tilde{\sigma}_i \leq \sigma_{\tilde{m}-i+1}, ~\mathrm{for}~1\leq
  i\leq\mathrm{min}(r,k),
\end{equation}
  where $\tilde{m}=\mathrm{min}(m,n)$.
  \label{th:pst_sv}
\end{lemma}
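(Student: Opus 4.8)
The plan is to reduce Lemma~\ref{th:pst_sv} to the eigenvalue version, Lemma~\ref{th:pst}, by passing to the symmetric Hermitian dilation, which turns singular values into eigenvalues and a two--sided compression into a single symmetric one. To $A\in\mathbb{R}^{m\times n}$ I would associate
\begin{equation}
  \hat A = \begin{pmatrix} 0 & A \\ A^T & 0 \end{pmatrix}\in\mathbb{R}^{(m+n)\times(m+n)},
\end{equation}
which is real symmetric and whose spectrum consists of $\pm\sigma_1,\dots,\pm\sigma_{\tilde m}$ together with $|m-n|$ zero eigenvalues. Hence the singular values of $A$ are exactly the nonnegative eigenvalues of $\hat A$, and Lemma~\ref{th:pst} becomes applicable to $\hat A$.

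Next I would assemble the block--diagonal matrix $V = \begin{pmatrix} B & 0 \\ 0 & C\end{pmatrix}\in\mathbb{R}^{(m+n)\times(k+r)}$. From $B^TB=I_k$ and $C^TC=I_r$ one obtains $V^TV=I_{k+r}$, so $V$ has orthonormal columns and is admissible in Lemma~\ref{th:pst}. The algebraic heart of the argument is the identity
\begin{equation}
  V^T\hat A V = \begin{pmatrix} 0 & B^TAC \\ (B^TAC)^T & 0\end{pmatrix},
\end{equation}
i.e.\ compressing the dilation of $A$ by $V$ yields precisely the dilation of $B^TAC$. Its nonnegative eigenvalues are therefore exactly $\tilde\sigma_1,\dots,\tilde\sigma_{\min(k,r)}$. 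Applying Lemma~\ref{th:pst} to the symmetric matrix $\hat A$ and the orthonormal matrix $V$ then produces an interlacing between the full ordered spectra of $\hat A$ and $V^T\hat A V$.

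It remains to restrict this eigenvalue interlacing to the nonnegative parts of both spectra, locating each $\tilde\sigma_i$ among the ordered eigenvalues $\beta_1\le\dots\le\beta_{k+r}$ of $V^T\hat A V$ and reading off the matching eigenvalues $\alpha_1\le\dots\le\alpha_{m+n}$ of $\hat A$ through $\alpha_j\le\beta_j\le\alpha_{(m+n)-(k+r)+j}$. The main obstacle will be the index bookkeeping: the dilation injects $|m-n|$ spurious zeros into the spectrum of $\hat A$ and $|k-r|$ into that of $V^T\hat A V$, and one must evaluate the Poincaré inequality at precisely the indices that select the positive singular values. This requires the admissibility fact $\min(k,r)\le\min(m,n)$ (which follows from $k\le m$ and $r\le n$) and, for each $i$, a verification that the selected index does not fall inside a zero block. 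Pinning down these indices so that the two bounds collapse to the stated values $\sigma_i$ and $\sigma_{\tilde m-i+1}$ is the delicate step, and is exactly where the symmetric placement of the $\pm\sigma$ pairs around the zero eigenvalues must be exploited carefully; an alternative route, should the zero-block tracking prove awkward, is to bypass the dilation and argue directly from the Courant--Fischer min--max characterization of the singular values, at the cost of re-deriving that variational principle.
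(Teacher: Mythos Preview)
The paper does not prove this lemma; it is stated as a known result with a citation to Rao (1979) and then applied directly, so there is no in-paper proof to compare your proposal against.

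That said, your plan is sound and is one of the two standard routes to this result. The Hermitian dilation $\hat A$ together with the block-diagonal compressor $V=\mathrm{diag}(B,C)$ does exactly what you claim: $V$ has orthonormal columns, and $V^T\hat A V$ is the dilation of $B^TAC$, so Lemma~\ref{th:pst} applies verbatim. You are also right that the only nontrivial work is the index tracking through the $|m-n|$ and $|k-r|$ zero eigenvalues injected by the dilations; this is tedious but mechanical once you write the ordered spectra of $\hat A$ and $V^T\hat A V$ explicitly as $(-\sigma_{\tilde m},\dots,-\sigma_1,0,\dots,0,\sigma_1,\dots,\sigma_{\tilde m})$ and the analogous list for the $\tilde\sigma_i$, and then evaluate the Poincar\'e inequality at the index corresponding to the $i$-th positive entry. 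The alternative you mention, arguing directly from the Courant--Fischer min--max characterization of singular values, is in fact the route taken in Rao's original paper and avoids the zero-block bookkeeping entirely at the price of redoing the variational argument; either approach is acceptable here.
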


Let, then, $A$ be a square matrix and $B=C$. In the current case, $A = C(\mathbf{u})$, and $B = \Phi$. Hence, this theorem can be applied to the convective term by
making use of the fact that the singular values of a real skew-symmetric matrix
correspond to the absolute values of the eigenvalues, which suffices to
estimate the eigenbound of the operator. 

\begin{figure}[h]
  \centering
  \begin{tikzpicture}
    \draw[thick] (0,0) -- (9,0);
    \foreach \x in {0,1,2,3,4,5,6,7,8,9} {
      \draw (\x,0.1) -- (\x,-0.1);
    }
    \node (c) at (0,-0.5) {$\sigma_1$};
    \node (c) at (1,-0.5) {$\sigma_2$};
    \node (c) at (2,-0.5) {$\dots$};
    \node (c) at (3,-0.5) {$\sigma_{k-1}$};
    \node (c) at (4,-0.5) {$\sigma_k$};
    \node (c) at (5,-0.5) {$\dots$};
    \node (c) at (6,-0.5) {$\sigma_{m-k}$};
    \node (c) at (7,-0.5) {$\sigma_{m-k+1}$};
    \node (c) at (8,-0.5) {$\dots$};
    \node (c) at (9,-0.5) {$\sigma_m$};

    \draw[<->, ultra thick, color=red] (4,0.5) -- (7,0.5) node [midway,above] {$\tilde{\sigma}_k\in[\sigma_k,\sigma_{m-k+1}]$};
    \draw[<->, ultra thick, color=red] (3,1.25) -- (6,1.25) node [midway,above] {$\tilde{\sigma}_{k-1}\in[\sigma_{k-1},\sigma_{m-k}]$};

  \end{tikzpicture}
  \caption{Graphical representation of the range of reduced singular values for $i=k$ and $i=k-1$, in red, according
  to the Poincaré separation theorem for singular values.}
  \label{fig:pst_sv}
\end{figure}
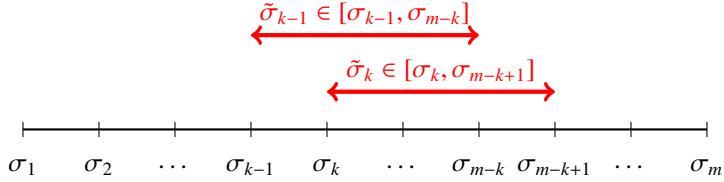

Hence, the inequality can be
rewritten for $r=k$, $m=n$ with $i=k$ to estimate the eigenbound
of the convective term, which reads as follows,
\begin{equation}
\sigma_k \leq \tilde \sigma_k \leq \sigma_{m-k+1} \leq \sigma_m,
\label{eq:inequality_sigma}
\end{equation}
\noindent as graphically represented in Figure \ref{fig:pst_sv}. Compared to Lemma \ref{th:pst}, Eq.\eqref{eq:inequality_sigma} does include the inequality $\sigma_{m-k+1} \leq \sigma_m$, as Lemma \ref{th:pst_sv} bounds the singular values of the reduced matrix with $\sigma_k$ and $\sigma_{m-k+1}$ as shown in Fig.\ \ref{fig:pst}, yet by the assumed order of the singular values $\sigma_{m-k+1}\leq\sigma_m$. As the convective operator is represented by a skew-symmetric matrix, it is a normal matrix, i.e. $C^*C=CC^*$, and thus the results in terms of singular values can be translated to eigenvalues according to Lemma \ref{th:evals}.  Thus, the eigenvalues of $C(\mathbf{u})$ are greater than
those of $C_r(\mathbf{a})$. Therefore, $\rho(C_r)\leq\rho(C)$.

\begin{lemma} \label{th:evals}
   Let $A\in\mathbb{C}^{n\times n}$ be a normal matrix (i.e. $A^*A=AA^*$). Let $\lambda_1,\dots,\lambda_n$ be the eigenvalues of $A$, listed with algebraic multiplicity. Let $\sigma_1,\dots,\sigma_n$ be the singular values of $A$ (the nonnegative square roots of the eigenvalues of $A^*A$), also listed with multiplicity. Then, the singular values are the absolute values of the eigenvalues:
    \begin{equation}
        \{\sigma_1,\dots,\sigma_n\} = \{|\lambda_1|,\dots,|\lambda_n |\}.
        \label{eq:equiv_ev}
    \end{equation}
\end{lemma}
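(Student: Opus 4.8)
The plan is to invoke the spectral theorem for normal matrices, which characterizes normality as unitary diagonalizability, and then read off the singular values directly from that diagonalization. The only structural fact I need is standard, so the argument is short and computational once it is in place.

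First I would recall that a matrix $A\in\mathbb{C}^{n\times n}$ is normal if and only if it admits a unitary diagonalization: there exists a unitary matrix $U$ (so that $U^*U = UU^* = I$) and a diagonal matrix $\Lambda = \mathrm{diag}(\lambda_1,\dots,\lambda_n)$ collecting the eigenvalues of $A$ with their algebraic multiplicities, such that $A = U\Lambda U^*$. This is the sole nontrivial ingredient; I would either cite it or note that it follows from Schur's triangularization together with the observation that a normal upper-triangular matrix is necessarily diagonal.

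Next I would form the Hermitian product $A^*A$ and substitute the diagonalization, using $U^*U = I$:
\[
A^*A = (U\Lambda U^*)^*(U\Lambda U^*) = U\Lambda^* U^* U \Lambda U^* = U(\Lambda^*\Lambda)U^*.
\]
Since $\Lambda$ is diagonal, $\Lambda^*\Lambda$ is diagonal as well, with entries $(\Lambda^*\Lambda)_{ii} = \overline{\lambda_i}\,\lambda_i = |\lambda_i|^2$. Hence $A^*A = U\,\mathrm{diag}(|\lambda_1|^2,\dots,|\lambda_n|^2)\,U^*$, which is itself a unitary diagonalization of $A^*A$. Consequently the eigenvalues of $A^*A$ are precisely $|\lambda_1|^2,\dots,|\lambda_n|^2$, counted with multiplicity.

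Finally, by the definition of the singular values as the nonnegative square roots of the eigenvalues of $A^*A$, I would conclude that $\sigma_i = \sqrt{|\lambda_i|^2} = |\lambda_i|$ up to a reordering of indices, which is exactly the multiset equality claimed in Eq.\eqref{eq:equiv_ev}. I do not anticipate any genuine obstacle: the whole argument rests on the spectral theorem, and once the unitary diagonalization is available the evaluation of $A^*A$ is immediate. The only point deserving mild care is the bookkeeping of multiplicities and ordering, which is precisely why the statement is phrased as an equality of (multi)sets rather than a term-by-term identity between prescribed orderings.
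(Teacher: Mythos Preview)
Your proposal is correct and follows essentially the same route as the paper: invoke the spectral theorem to write $A = U\Lambda U^*$ with $U$ unitary, compute $A^*A = U\,\mathrm{diag}(|\lambda_1|^2,\dots,|\lambda_n|^2)\,U^*$, and read off the singular values as $|\lambda_i|$. The only cosmetic difference is that you explicitly flag the multiset/ordering caveat, which the paper handles by the phrase ``with a proper reordering.''
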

\begin{proof}
    As $A$ is a normal matrix, it can be diagonalized with a unitary matrix $U$, which is a set of orthonormal eigenvectors such that $U^*U=I$, and a diagonal matrix $\Lambda$, which has the eigenvalues of $A$ as entries, i.e. $\Lambda = \mathrm{diag}(\lambda_1,\dots,\lambda_n)$ \cite[pp.\ 101--103]{johnson_unitary_1985}. Therefore,
    \begin{equation}
        A = U\Lambda U^*.
    \end{equation}
    \noindent Then, $A^*A$ can be written as
    \begin{equation}
        A^*A =(U\Lambda U^*)^*(U\Lambda U^*) = U\Lambda^*U^*U\Lambda U^* = U\Lambda^*\Lambda U^*,
    \end{equation}
    \noindent where $\Lambda^*\Lambda = \mathrm{diag}(\lambda_1^*\lambda_1,\dots,\lambda_n^*\lambda_n) = \mathrm{diag}(|\lambda_1|^2,\dots,|\lambda_n|^2)$. Therefore,
    \begin{equation}
        A^*A = U\mathrm{diag}(|\lambda_1|^2,\dots,|\lambda_n|^2)U^*.
    \end{equation}
    \noindent Then, the $i-$th column of $U$, $\mathbf{u}_i$, is going to be an eigenvector of $A^*A$ with eigenvalue $|\lambda_i|^2$, since $A^*A\mathbf{u}_i=\mathbf{u}_i|\lambda_i|^2$.    
    By definition, the singular values of $A$ are the eigenvalues of $A^*A$. Therefore, it shows that Eq.\eqref{eq:equiv_ev} holds and, with a proper reordering,
    \begin{equation}
        \sigma_i = |\lambda_i|,~\forall i.
    \end{equation}
\end{proof}

As Eq.\eqref{eq:ebound_F} indicates, the magnitude of the relevant eigenvalue
in the ROM is determined by the contributions of both diffusive and
convective term. 
Thus, the following theorem can be stated.
\begin{theorem}
  Let $\Delta t^{ROM}$ be the largest stable timestep for the ROM given a state $\mathbf{a}$, and $\Delta t^{FOM}$ the largest stable timestep for a state $\mathbf{u}$ that is in the span of $\Phi$, e.g.\ $\mathbf{u}=\Phi\mathbf{a}$, both using the same explicit time integration scheme. Then, $\Delta t^\mathrm{ROM} \geq \Delta t^\mathrm{FOM}$.
  \label{th:dt}
\end{theorem}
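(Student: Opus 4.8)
The plan is to reduce the comparison of the two maximum stable timesteps to a comparison of the eigenbounds that govern the linear stability of each model, and then to invoke the separation results already assembled. For either model the largest stable timestep is obtained by scaling the worst-case eigenvalue of the linearized operator until it meets the boundary of the stability region $S=\{z\in\mathbb C : |R(z)|\le 1\}$, and by Bendixson's inequality this eigenvalue is confined to the rectangle whose horizontal extent is the diffusive eigenbound and whose vertical half-extent is the convective eigenbound. Thus the timestep of each model is controlled, through the geometry of $S$, by the pair $(\rho(D_\bullet),\rho(C_\bullet))$, and the theorem reduces to showing that passing from the FOM to the ROM can only shrink this rectangle.

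I would first handle the diffusive (real) direction. Since $D_r=\Phi^T D\Phi$ with $D$ real symmetric and $\Phi$ being $\Omega$-orthonormal, Lemma~\ref{th:pst} in its weighted form applies directly and gives $\rho(D_r)\le\rho(D)$. For the convective (imaginary) direction I would use that the skew-symmetric $C(\mathbf u)$ is normal, so by Lemma~\ref{th:evals} its singular values are the moduli of its eigenvalues, and likewise for $C_r$. Applying Lemma~\ref{th:pst_sv} with $A=C(\mathbf u)$ and both orthonormal factors taken equal to $\Phi$, the choice $i=k$ yields $\tilde\sigma_k\le\sigma_{m-k+1}\le\sigma_m$ as in Eq.~\eqref{eq:inequality_sigma}, that is $\rho(C_r)\le\rho(C)$. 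These two inequalities together show that the ROM Bendixson rectangle $\mathcal R_{\mathrm{ROM}}$ is contained in the FOM rectangle $\mathcal R_{\mathrm{FOM}}$.

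It remains to turn the nesting $\mathcal R_{\mathrm{ROM}}\subseteq\mathcal R_{\mathrm{FOM}}$ into the ordering of timesteps. The clean way is to characterize the largest stable timestep as
\[
\Delta t^{\bullet}=\sup\{\tau>0 : \tau\,\mathcal R_\bullet\subseteq S\},
\]
so that a step is admissible exactly when the whole scaled rectangle lies in $S$. Then every $\tau$ with $\tau\,\mathcal R_{\mathrm{FOM}}\subseteq S$ also satisfies $\tau\,\mathcal R_{\mathrm{ROM}}\subseteq S$, the admissible set for the ROM contains that of the FOM, and taking suprema gives $\Delta t^{\mathrm{ROM}}\ge\Delta t^{\mathrm{FOM}}$.

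I expect this final step to be the main obstacle, because the algorithm fixes the timestep by placing only the worst-case corner of the rectangle on $\partial S$ along its own ray, rather than by fitting the entire rectangle. Identifying these two notions requires a monotonicity property of $S$ (if a point with $\Re z\le 0$ lies in $S$, then so does the sub-rectangle joining it to the imaginary axis), which is not automatic: it fails for explicit Euler, whose stability disk meets the imaginary axis only at the origin. I would therefore either adopt the rectangle-containment definition above, which makes the argument scheme-independent and is the natural conservative bound, or restrict to the schemes actually employed here (the third- and fourth-order Runge-Kutta methods, whose stability regions contain a segment of the imaginary axis) and verify the monotonicity property directly for them.
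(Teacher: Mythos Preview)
Your argument follows the paper's exactly for the two key inequalities: Lemma~\ref{th:pst} gives $\rho(D_r)\le\rho(D)$, and Lemma~\ref{th:pst_sv} together with Lemma~\ref{th:evals} gives $\rho(C_r)\le\rho(C)$, so the ROM Bendixson rectangle sits inside the FOM one.

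The divergence is in the final step. The paper does not use your rectangle-containment characterization $\Delta t^{\bullet}=\sup\{\tau:\tau\mathcal R_\bullet\subseteq S\}$; it simply forms the corner $\lambda_{F,\bullet}=\rho(D_\bullet)+i\,\rho(C_\bullet)$, notes $\|\lambda_{F,\mathrm{ROM}}\|\le\|\lambda_{F,\mathrm{FOM}}\|$, and invokes Eq.~\eqref{eq:dt} with the remark that $\Delta t$ is ``inversely proportional to $\|\lambda_F\|$''. That phrasing tacitly treats $z_{\max}$ as fixed, i.e.\ it ignores that $z_{\max}$ depends on the direction of the ray through $\lambda_F$, which is precisely the subtlety you isolate. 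Your proposed fixes --- either redefining $\Delta t$ via rectangle containment, or verifying for the third- and fourth-order RK schemes actually used that a point on $\partial S$ with $\Re z\le 0$ dominates its sub-rectangle --- are more careful than what the paper writes down; the paper's proof does not engage with this issue at all. In short, your plan is the paper's plan, and the obstacle you anticipate is one the paper passes over rather than resolves.
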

\begin{proof}
  Let $\lambda_{F,\mathrm{FOM}} = \rho(D) + i\rho(C(\mathbf{u}))$ be the eigenvalue of the FOM, and $\lambda_{F,\mathrm{ROM}}$ be calculated as in Eq.\eqref{eq:ebound_F}. As shown above, $\rho(D_r)\leq\rho(D)$ (Lemma \ref{th:pst}). Moreover, given $\mathbf{u}=\Phi \mathbf a$ is the FOM equivalent of $\mathbf{a}$, $\rho(C_r)\leq\rho(C)$ (Lemma \ref{th:pst_sv}). Therefore, $||\lambda_{F,\mathrm{FOM}}|| \geq ||\lambda_{F,\mathrm{ROM}}||$, where $||\cdot||$ denotes the complex modulus. Let $\Delta t$ be calculated as in Eq.\eqref{eq:dt}. As the stability region of the explicit time integration scheme is fixed, and $\Delta t$ is inversely proportional to $||\lambda_F||_2$, then $\Delta t^\mathrm{ROM} \geq \Delta t^\mathrm{FOM}$.
\end{proof}

\begin{remark}[Applicability to time-integration]
    Strictly speaking, Theorem \ref{th:dt} guarantees $\Delta t^\mathrm{ROM} \geq \Delta t^\mathrm{FOM}$ only when the convective field is identical for both FOM and ROM (i.e. $\mathbf{u}=\Phi\mathbf{a}$). In a practical time-integration scenario, the ROM trajectory $\mathbf{u}_r=\Phi\mathbf a(t)$ will deviate from the FOM solution $\mathbf u(t)$. Consequently, a rigorous inequality cannot be guaranteed for $t>0$.

    However, Theorem \ref{th:dt} serves as a strong theoretical proxy for the stability behavior of the ROM. Since the projection $\Phi$ inherently removes the high-frequency modes associated with the largest eigenvalues of $C(\mathbf u)$, the inequality $\rho(C_r)<\rho(C)$ is expected to hold robustly in practice. The theorem thus provides the structural justification for the increased stability limits observed in numerical experiments, even in the presence of trajectory divergence.
\end{remark}



\section{Numerical experiments} \label{sec:results}

In this section, the new adaptive timestepping method outlined in Algorithm \ref{alg:redeigcd} is tested for two different fluid flow test cases. First of all, the robustness of the method in a case with periodic boundary conditions, namely the shear-layer roll-up, is presented. Moreover, the evolution of the maximum stable $\Delta t$ for different numbers of modes is presented, and compared with the maximum stable $\Delta t$ using the \texttt{AlgEigCD}. In the second case, the influence of non-homogeneous boundary conditions on the preservation of stability is considered for the unsteady wake behind an actuator disk \cite{sanderse_non-linearly_2020}. In both test cases, the classical explicit fourth-order RK scheme is used in a two-dimensional incompressible Navier-Stokes solver \cite{sanderse_energy-conserving_2013,sanderse_ins2d_2018}. Both methods have been run for $M=[16,32,64,128,200]$ modes. Table \ref{tab:methods} summarizes the methods used in this work.

\begin{table}[h]
\centering
\caption{Summary of the methods considered in this work.}
\label{tab:methods}
\begin{tabularx}{\textwidth}{clX}
\toprule
  \textbf{Case} & \textbf{Method} & \textbf{Description / Purpose} \\
\toprule
  \texttt{A} & FOM+\texttt{AlgEigCD} & Full-order model with staggered \texttt{AlgEigCD} (\ref{sec:stg_aecd}); used as reference. \\ \addlinespace
  \texttt{B} & ROM+\texttt{RedEigCD} & ROM with proposed self-adaptive timestep method. \\ \addlinespace
  \texttt{C} & ROM+constant & ROM with constant $\Delta t$: $0.1$ for the shear-layer roll-up, $4\pi/200$ for the actuator disk {\cite{sanderse_non-linearly_2020}}; used to compare accuracy of ROM+\texttt{RedEigCD}. \\ \addlinespace
  \texttt{D} & Best ROM & Best approximation ROM (Eq.~\eqref{eq:best_approximation}) with constant $\Delta t$; used to verify $\Delta t_{\text{ROM}} > \Delta t_{\text{FOM}}$ throughout simulation. \\
\bottomrule
\end{tabularx}
\end{table}

\subsection{Shear-layer roll-up}

In this case, the roll-up of a shear layer has been simulated using a square domain of side length $2\pi$ with full-periodic boundary conditions. The initial conditions for the case are the following:

\begin{equation}
u_0(x,y) = 1 + \left\{
\begin{array}{cc}
\tanh\left(\frac{y-\pi/2}{\delta}\right), & y \leq \pi, \\
\tanh\left(\frac{3\pi/2-y}{\delta}\right), & y > \pi, 
\end{array}
\right.
\hspace{1in}
v_0(x,y) = \epsilon\sin(x),
\end{equation}

\noindent where $\delta = \pi/15$ and $\epsilon=1/20$. The Reynolds number is 1000. The FOM discretization
used for this case employs $100\times100$ finite volumes, with variable timestep computed
using \texttt{AlgEigCD} (\ref{sec:stg_aecd}). Snapshots have been
extracted for $t=0$ to $t=20$ from \texttt{Case A} (Table \ref{tab:methods}). Figure \ref{fig:svd_sl} shows the decay of the singular values for this case.

\begin{figure}[h]
\centering
\includegraphics[height=0.3\textwidth]{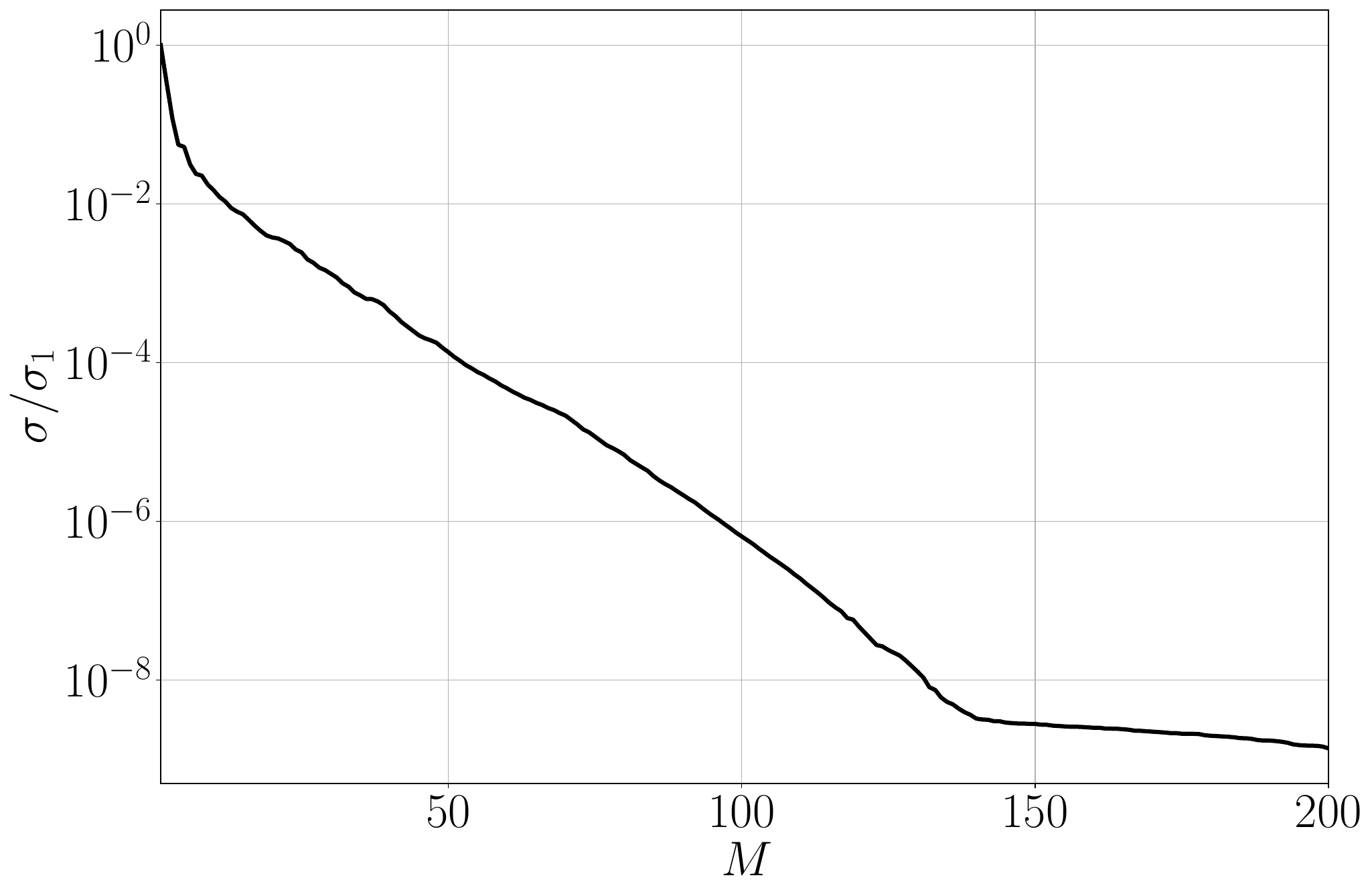}
\caption{Singular values for the Re=1000 shear-layer roll-up up to $M=200$.}
\label{fig:svd_sl}
\end{figure}

Moreover, note that even for the largest number of modes considered,
$M=200$, where the singular values have decayed up to $\mathcal{O}(10^{-9})$, the number of degrees of freedom is notably reduced compared to those of the FOM, which is $3\times10^4$, considering both $x-$ and $y-$
velocity components as well as the pressure. The simulation has been
run for \texttt{Case B} (ROM+\texttt{RedEigCD}) with the number of modes previously stated and the evolution of
$\Delta t$ is compared with \texttt{Case A}.

\begin{figure}[h]
\centering
\includegraphics[height=0.3\textwidth]{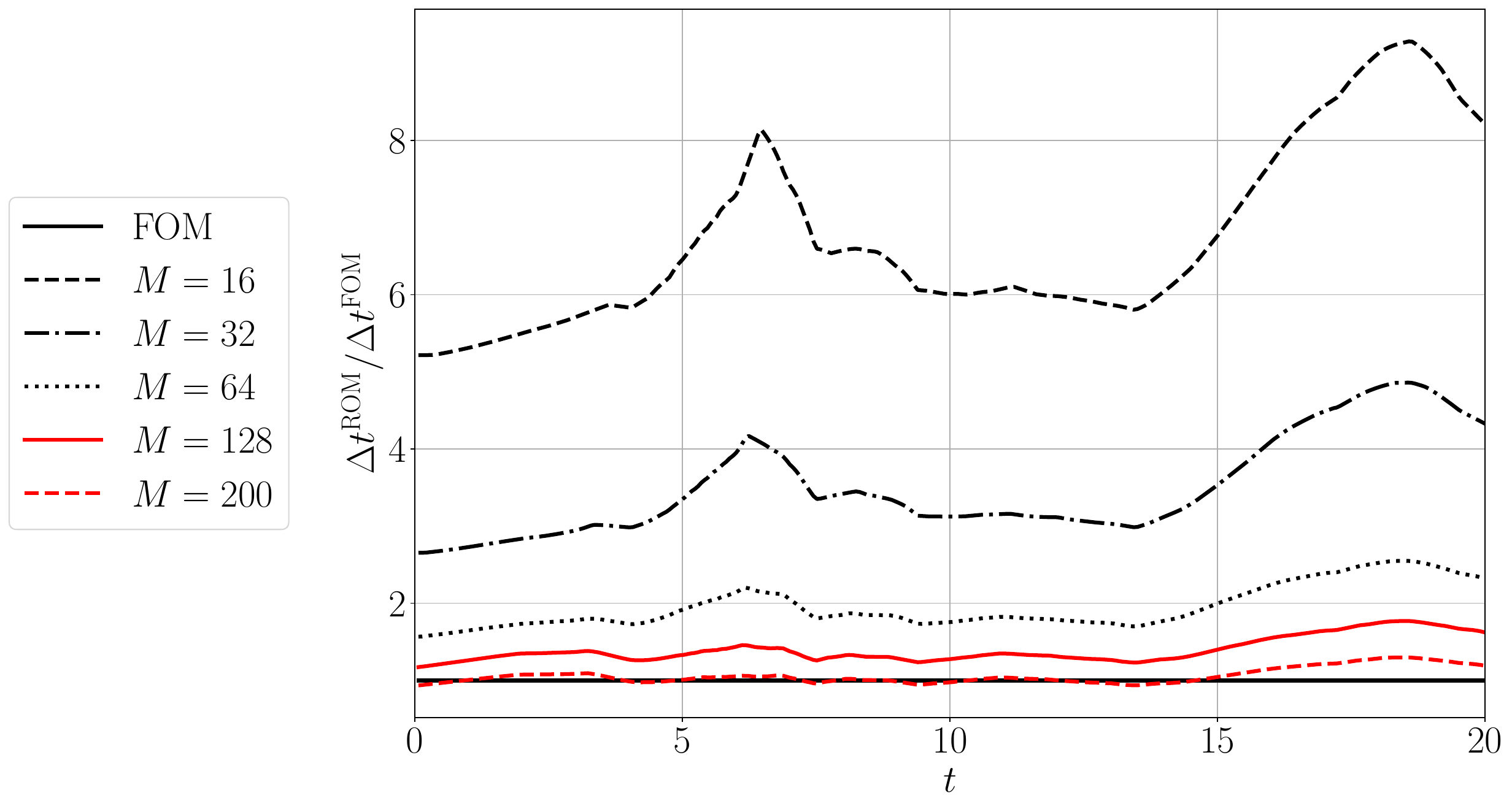}
\includegraphics[height=0.3\textwidth]{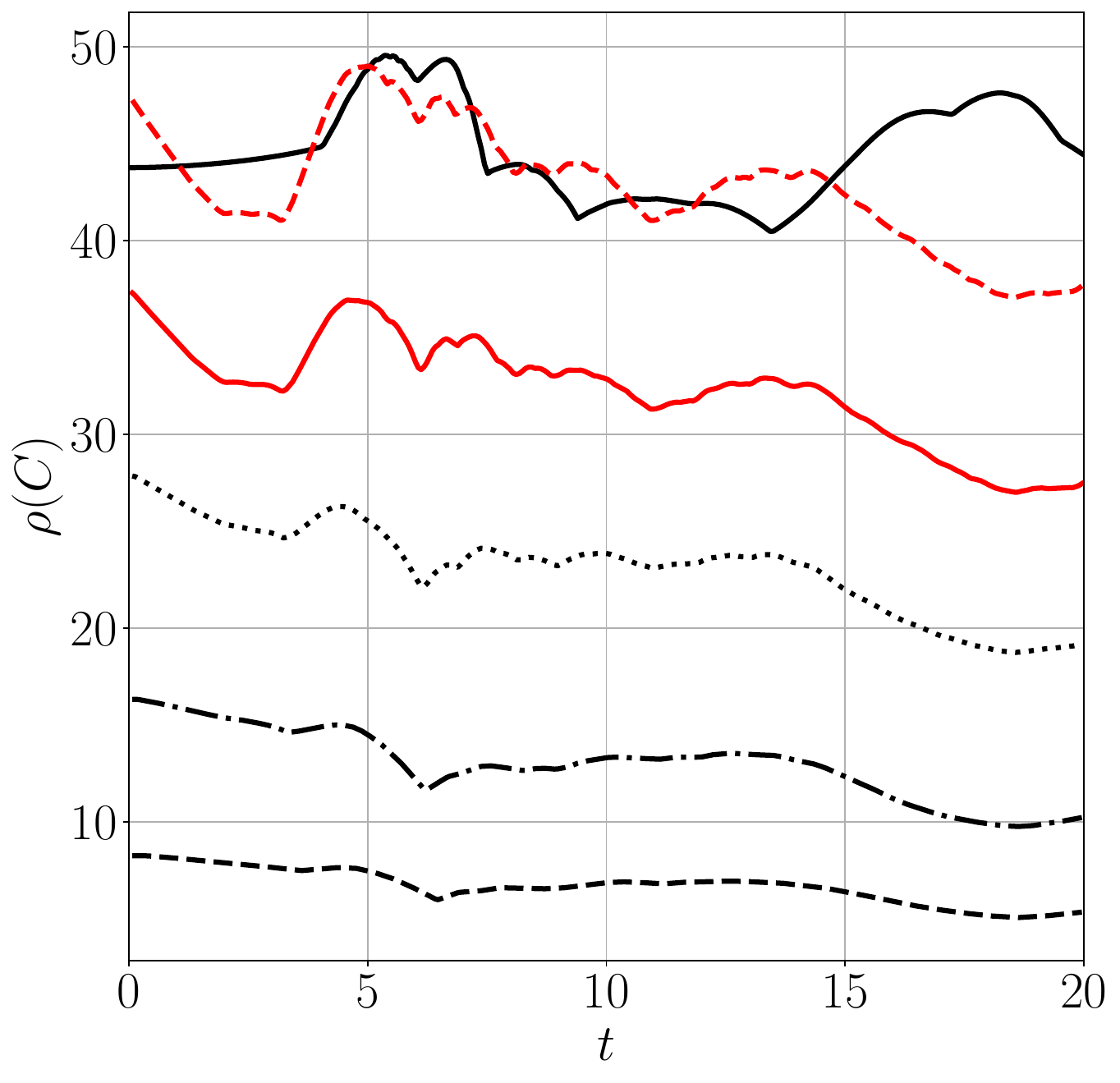}
  \caption{Timestep ratio between \texttt{Case B} and \texttt{Case A} up to $t=20$ (left)
  and evolution of the imaginary eigenbound (right) for the roll-up of a shear-layer at $\text{Re}=1000$.}
  \label{fig:sl_redeigcd}
\end{figure}

Figure \ref{fig:sl_redeigcd} (left) shows that the new self-adaptive timestepping method is effective in
providing a larger timestep for the ROM than for the FOM, as expected from Theorem \ref{th:dt}, as even for $M=200$, the largest number of modes considered, the ratio is around 1, which is indicated in the plot with a horizontal thicker line. Depending on the current
flow physics and number of modes of the ROM, this timestep can be up to 9 times larger than for the FOM, which significantly
accelerates the simulation by notably reducing the number of timesteps required to reach the final time. Furthermore, Figure \ref{fig:sl_redeigcd} (left) shows how the ratio between the timesteps obtained in  \texttt{Case B} and \texttt{Case A} converges to 1 for increasing ROM dimension, as expected from Theorem \ref{th:dt}, as the eigenvalues of the reduced operators interlace with those of the full-order operators. Physically, this trend is driven by the inclusion of higher-index POD modes. As these capture the steeper spatial gradients of the flow, the ROM's spectral radius monotonically approaches the FOM limit, as established in Section \ref{sec:biggerdt}. This is especially clear for $M=200$, where the singular values have decayed to $\mathcal{O}(10^{-9})$, and the truncation errors between the ROM and the FOM (Figure \ref{fig:svd_sl})
get closer to zero. Furthermore, note that at
certain points, especially at the beginning of the simulation, the timestep ratio
for $M=200$ is slightly smaller than 1.

With regards to the time-evolution of the imaginary eigenbound (Figure
\ref{fig:sl_redeigcd}, right), the expected behaviour as theoretically derived in Section \ref{sec:biggerdt} holds: the 
eigenbound of the reduced convective terms steadily increases with an increase in the number
of modes. For $M=200$, however, in some points the imaginary eigenbound for the ROM is slightly larger than for the FOM. (Note that representing the time
evolution for the diffusive operator has no relevance in this discussion, as it
remains constant throughout the whole simulation.)

The reason for the mismatch with theory reported for $M=200$ in Figure \ref{fig:sl_redeigcd} is as follows. Section \ref{sec:biggerdt} considers that the given matrix at FOM level and its reduced counterpart are evaluated at the equivalent state vector $\mathbf{u}$. However, in a ROM simulation at a given $t$, the reduced velocity $\mathbf{u}_r(t)=\Phi\mathbf{a}(t)$ is not identical to its FOM counterpart, $\mathbf{u}(t)$, and therefore both velocity fields lead to non-identical convective operators. Given these differences, Lemma \ref{th:pst_sv} acts as an approximation and thus it may happen that $\rho(C(\mathbf{u}(t)))\leq\rho(C(\mathbf{u}_r(t)))$ at a certain $t$ when $\mathbf u(t)\neq \mathbf u_r(t)$. 

Therefore, to test if the $\Delta t$ at ROM level using the equivalent state vector is always larger than
the one for the FOM actually holds, the best approximation ROM \cite{sanderse_non-linearly_2020} should be used, which is defined as

\begin{equation}
    \mathbf{a}_\mathrm{best}(t) = \Phi^T\Omega\mathbf{u}(t),
    \label{eq:best_approximation}
\end{equation}

\noindent which should fulfill Theorem \ref{th:dt} throughout the whole simulation as both $C(\mathbf{u})$ and $C_r(\mathbf{a}_\mathrm{best})$ are evaluated at the equivalent state vector. Hence, the reduced convective and
diffusive operators for every snapshot have been constructed, and the eigenbounds using the \texttt{RedEigCD} method have been computed. As shown in Figure \ref{fig:bestApprox}, the result from Theorem \ref{th:dt} is indeed preserved 
throughout the whole simulation, with the $\Delta t$ computed from the best approximation ROM for the largest number
of modes, this being the most restrictive case. 

\begin{figure}[h]
  \centering
  \includegraphics[height=0.3\textwidth]{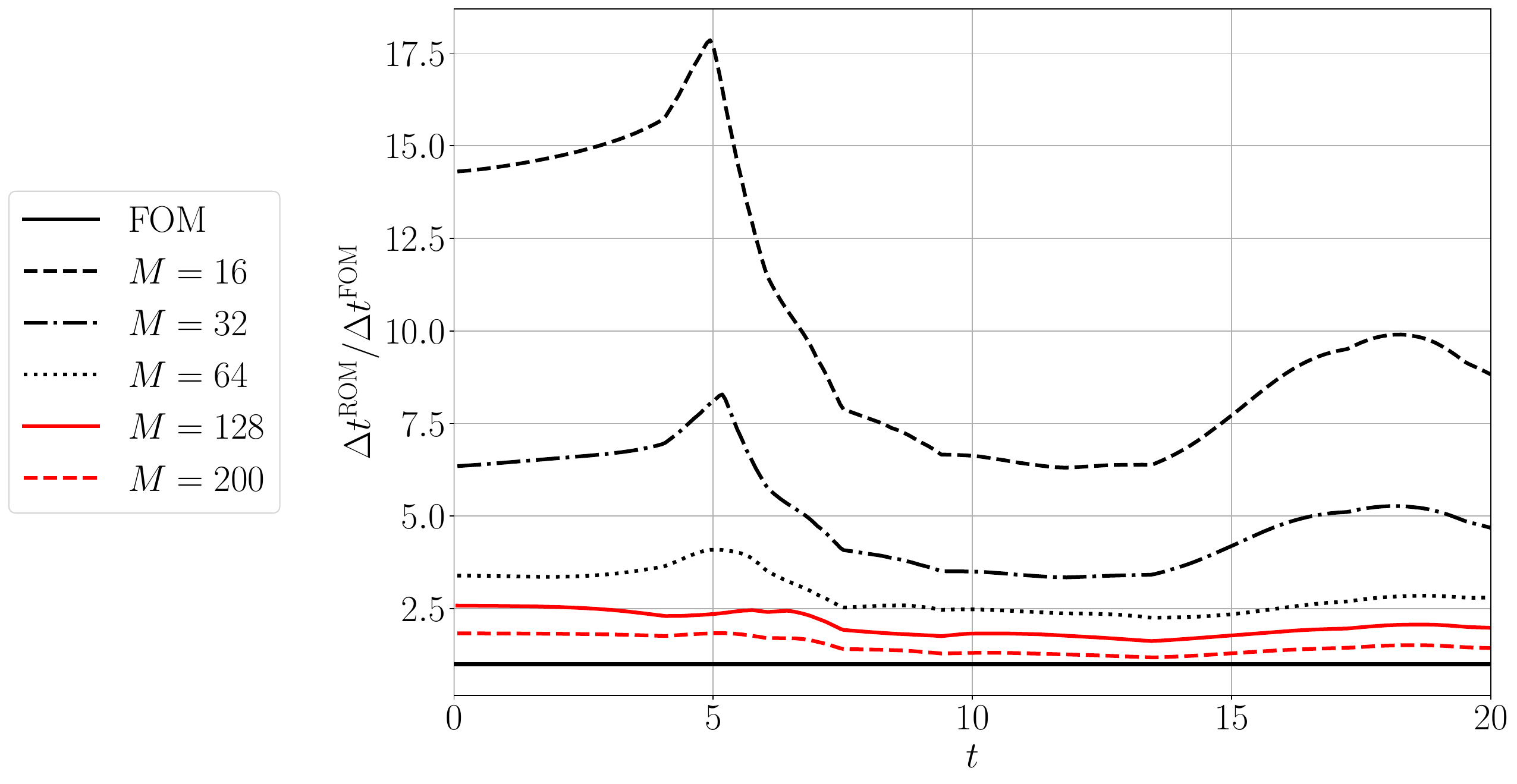}
  \caption{Timestep ratio between \texttt{Case D} and \texttt{Case A} for up to $t=20$ for the
  roll-up of a shear-layer of $\text{Re}=1000$.}
  \label{fig:bestApprox}
\end{figure}

Increasing the number of modes, the timestep for the ROM is becoming closer to that of the FOM, as observed in Figure \ref{fig:sl_redeigcd} (left) and \ref{fig:bestApprox}. By doing so, steeper gradients, which are the ones driving the magnitude of eigenbounds, are captured by the operators. This indeed pushes the magnitude of the reduced eigenbounds towards the magnitude of the FOM eigenbounds, thus justifying this observed trend.

Figure \ref{fig:error_comparison_sl} shows that the errors for both \texttt{Case B} and \texttt{Case C} have the same magnitude regardless of the number of modes ($M=16$, left; $M=200$, right). Thus, the use of the self-adaptive timestepping methods as performed in these cases
does not significantly affect the accuracy of the ROM compared to the error of the best approximation ROM,
as shown in Figure \ref{fig:error_comparison_sl}. In this case, $\Delta t$ for \texttt{Case C} is set to
0.01, as in the test case from \cite{sanderse_non-linearly_2020}. 
To isolate the temporal accuracy of the stability-controlled ROM from the spatial truncation error, we define the reduced error relative to the projected FOM snapshots, $\mathbf a_\mathrm{best}(t)$ which is defined as
\begin{equation}
    \varepsilon(\mathbf a_\mathrm{test}(t)) = \frac{||\mathbf a_\mathrm{test}(t) - \mathbf a_\mathrm{best}(t)||_2}{||\mathbf a_\mathrm{best}(t)||_2},
    \label{eq:error_rom}
\end{equation}
where $\mathbf a_\mathrm{test}$ indicates the reduced velocity field used to compute the error, i.e., $\mathbf{a}_\mathrm{\texttt{B}}$, $\mathbf{a}_\mathrm{\texttt{C}}$.
Figure \ref{fig:error_comparison_sl} shows that either for $M=16$ or for $M=200$ there is no loss of accuracy for the new self-adaptive timestepper (\texttt{Case B}) compared to \texttt{Case C}, which indicates that in this test case there is no loss of accuracy regardless of the number of modes. However, in \texttt{Case B} the use of \texttt{RedEigCD} provides potential gains of up to a factor of 9 compared to a constant $\Delta t$.
\begin{figure}[h]
  \centering
  \includegraphics[height=0.28\textwidth]{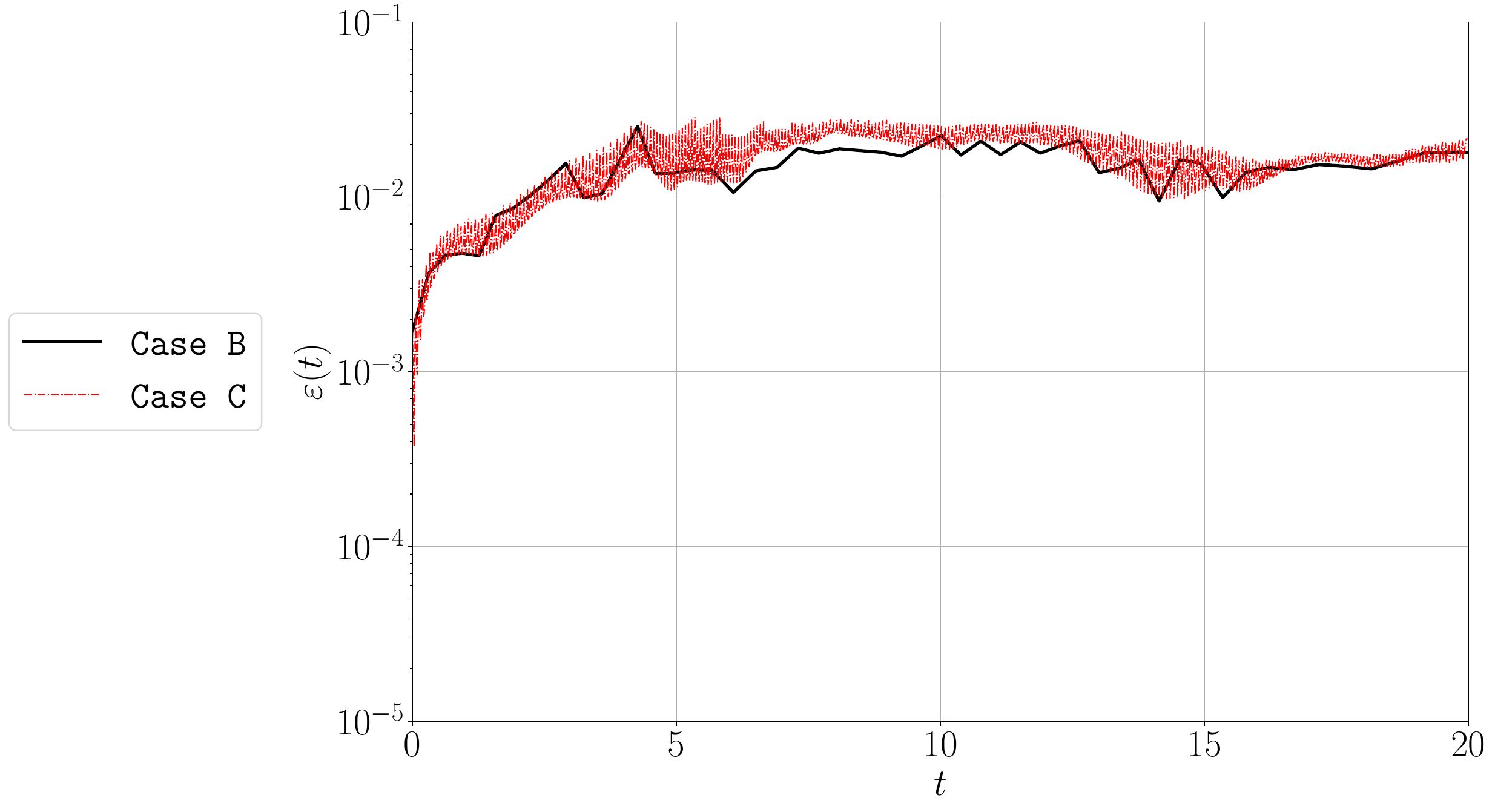}
  \includegraphics[height=0.28\textwidth]{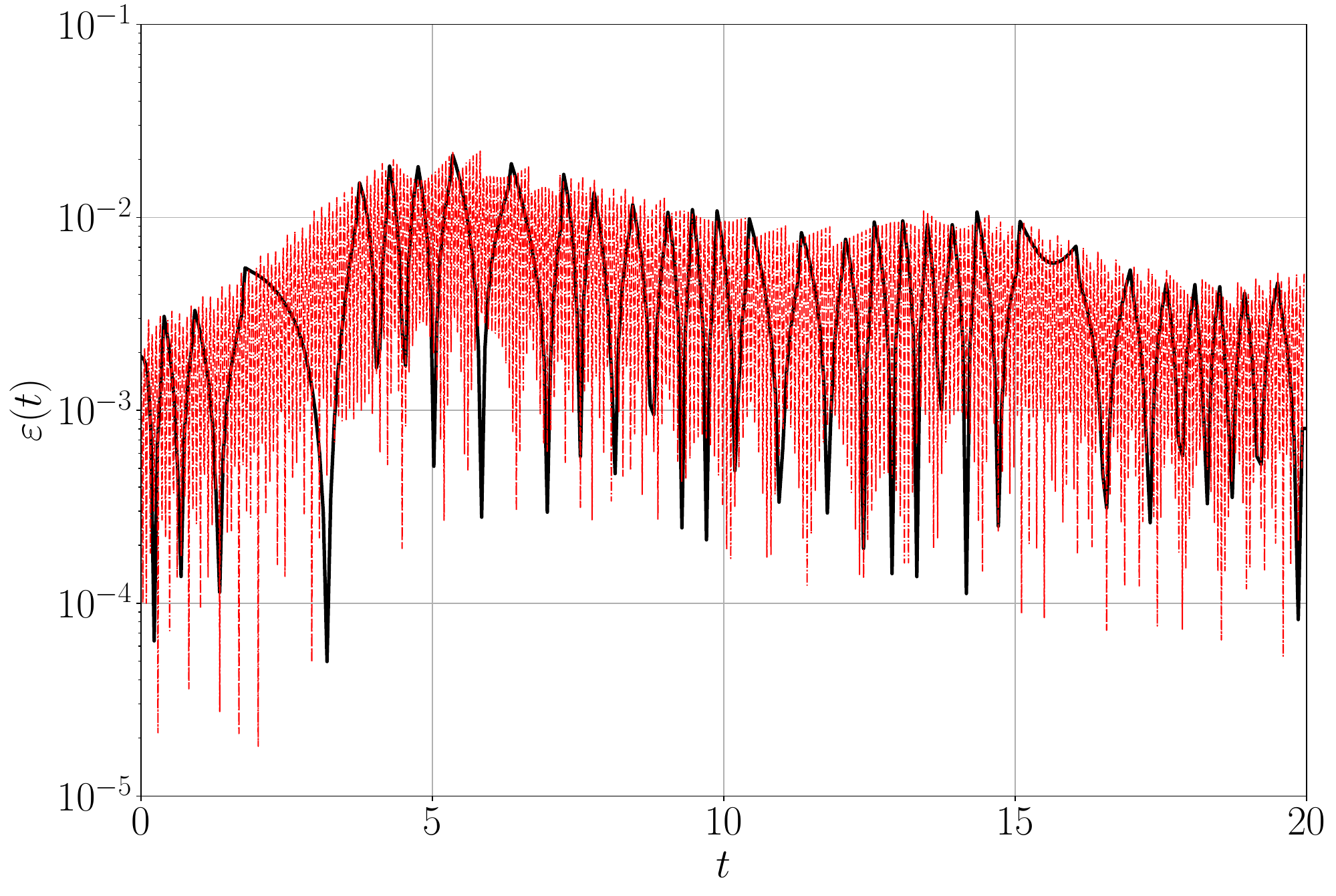}
  \caption{Error comparison of the ROM results for $M=16$ (left) and $M=200$ (right) in the shear-layer roll-up of $\text{Re}=1000$ for
  \texttt{Case B} and \texttt{C}.}
  \label{fig:error_comparison_sl}
\end{figure}

In terms of efficiency, the ROM with $M=64$ reduced the number of unknowns from $3\times10^4$ to 64, while simultaneously allowing timesteps up to almost 5 times larger than the FOM. This reduced the total number of timesteps, while the wall-clock time for each timestep was also reduced due to dimensionality reduction, thus having benefits from both factors. 


\subsection{Flow through an actuator disk}

In this case, the air flow around a wind turbine is modelled as in 
\cite{rosenberger_no_2023}. The simulation geometrical domain is $[0,10]\times[-2,2]$, with a
time-dependent prescribed inlet velocity at the left boundary, defined as follows,

\begin{equation}
  u(0,y,t) = \cos(\alpha(t)), \hspace{0.5in} v(0,y,t) = \sin(\alpha(t)), \hspace{0.5in} \alpha(t) = \frac{\pi}{6}\sin\left(\frac{t}{2}\right).
\end{equation}

The other boundaries correspond to an outflow condition \cite[Section 6.4.2]{sanderse_energy-conserving_2013}.
The wind turbine has been modelled using an actuator disk of diameter 1, placed at $(2,0)$, which corresponds to
a momentum sink modelled as a constant force $f=0.25$ acting in the negative $x$-direction. The initial conditions for this
case are

\begin{equation}
  u_0(x,y) = 1, \hspace{0.5in} v_0(x,y) = 0.
\end{equation}

The Reynolds number is 100. The FOM discretization used for this case is $200\times80$ finite volumes,
with variable timestep computed using \texttt{AlgEigCD} (\texttt{Case A}). The numerical tests performed correspond to those of Table \ref{tab:methods}. Snapshots have been extracted for $t=0$ to $t=8\pi$.
Similarly to the roll-up of a shear-layer, the decay of the singular values is presented in Figure \ref{fig:svd_actuator}.
Note that the decay follows the same trend as in Figure \ref{fig:svd_sl}.

\begin{figure}[h]
  \centering
  \includegraphics[height=0.3\textwidth]{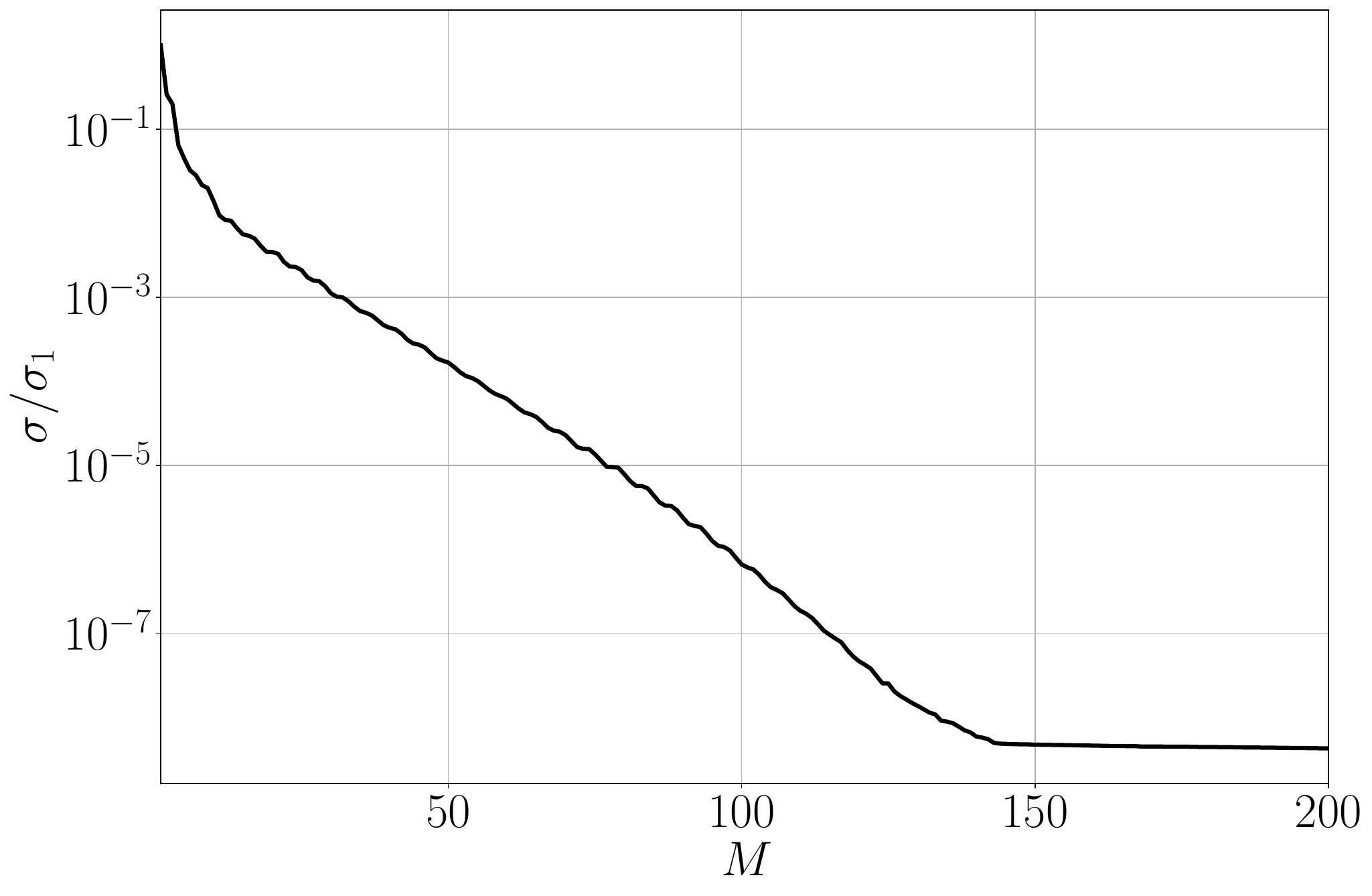}
  \caption{Singular values for the numerical simulation of an actuator disk of $\text{Re}=100$ up to 200 modes.}
  \label{fig:svd_actuator}
\end{figure}

The new stability-aware self-adaptive timestepping method is effective in significantly increasing the timestep
while maintaining stable simulations for non-homogeneous boundary conditions as shown in Figure \ref{fig:eb_actuator} (left). The obtained timestep for this case, 
depending on the flow conditions and the number of modes, can be up to 40 times larger, thus leading to significantly 
faster simulation times. Moreover, as expected from Section \ref{sec:biggerdt}, the evolution of the timestep in the 
case of the actuator disk shows that for $M=200$, for which the singular values do not decay anymore, the obtained 
timestep is marginally larger than the obtained for \texttt{Case A}.
As can be seen in Figure \ref{fig:eb_actuator} (right), the imaginary eigenbound is larger for larger ROM dimensions, analogously to the shear-layer roll-up testcase in Figure \ref{fig:sl_redeigcd} (right). Note
that in this case, the contribution of the boundary conditions appear in both the real and the imaginary term.

\begin{figure}[h]
\centering
\includegraphics[height=0.3\textwidth]{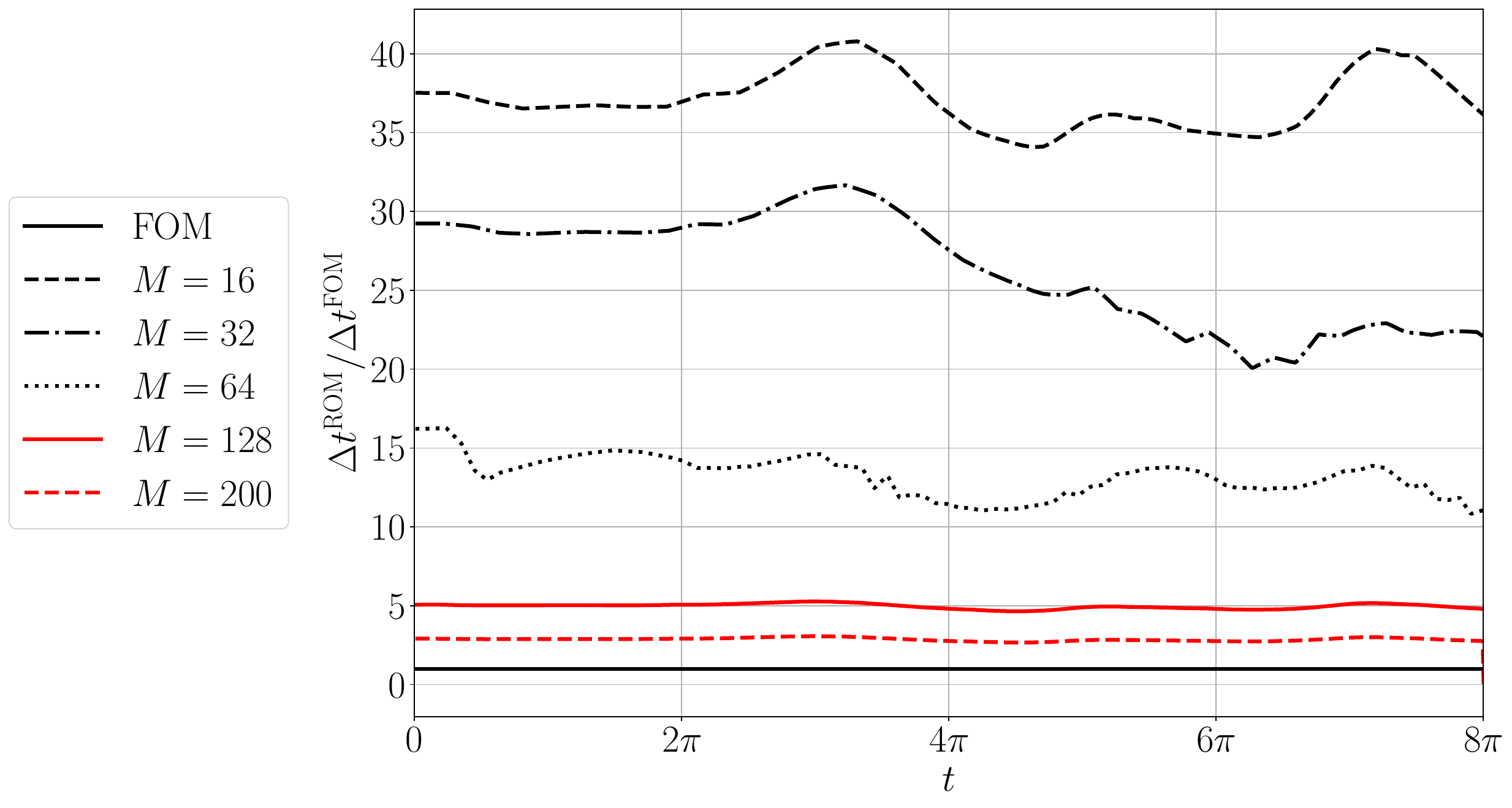}
\includegraphics[height=0.3\textwidth]{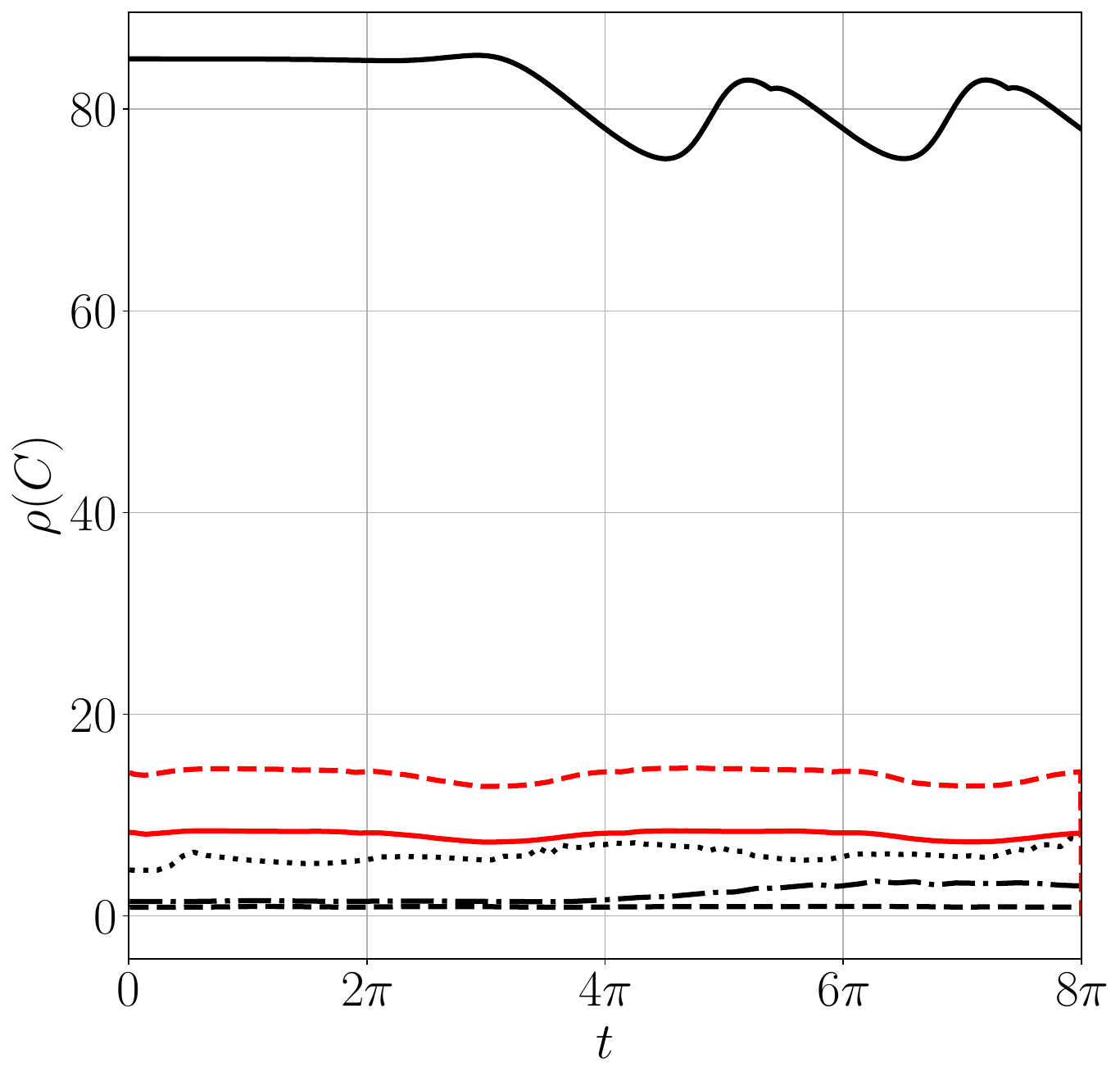}
  \caption{Timestep ratio between \texttt{Case B} and {Case A} up to $t=100$ (left)
  and evolution of the imaginary eigenbound (right) in the numerical simulation of a wind field of $\text{Re}=100$.}
  \label{fig:eb_actuator}
\end{figure}

The magnitude of the ROM error, calculated with Eq.\eqref{eq:error_rom}, is not affected by the variable $\Delta t$ compared to the reference result from \cite{sanderse_non-linearly_2020}, where $\Delta t$ was set to $4\pi/200$. As shown in Figure \ref{fig:error_comparison_actuator}, the accuracy of the ROM+\texttt{RedEigCD} (\texttt{Case B}) is comparable to that of the ROM with constant $\Delta t$ (\texttt{Case C}), for both $M=16$ and $M=200$. 
\begin{figure}[h]
  \centering
  \includegraphics[height=0.28\textwidth]{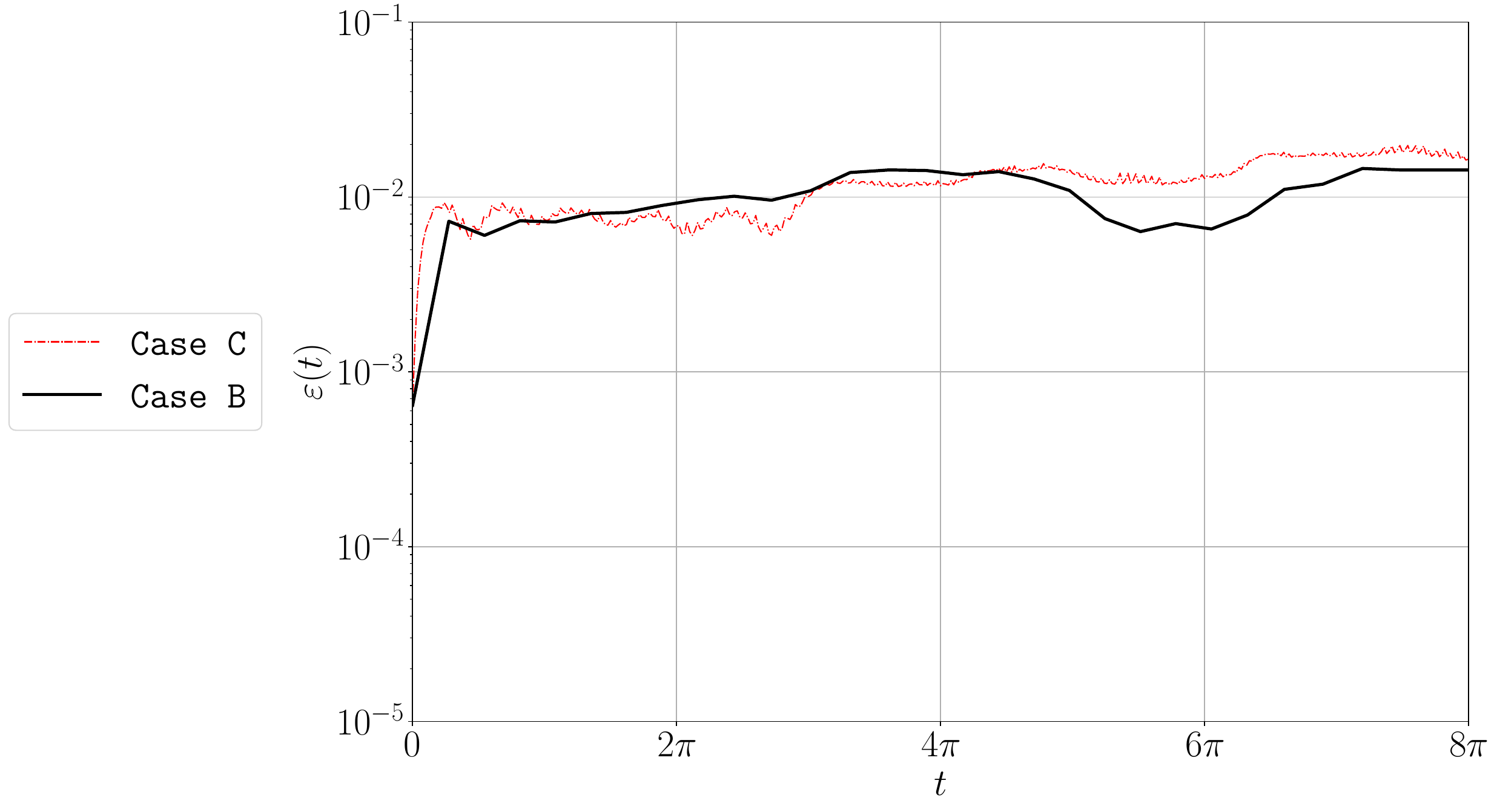}
  \includegraphics[height=0.28\textwidth]{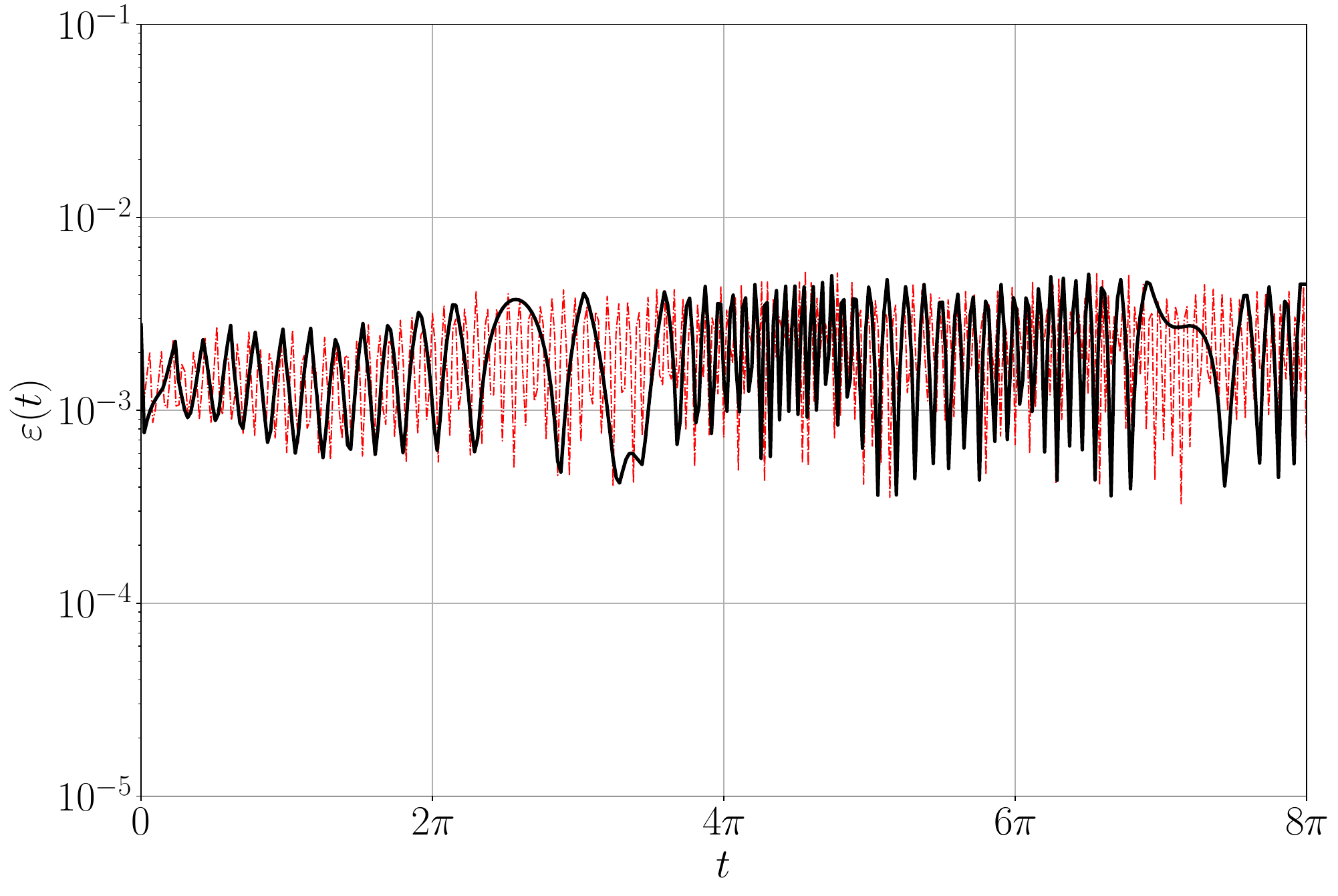}
  \caption{Error comparison of the ROM results for $M=16$ (left) and $M=200$ (right) in an actuator disk simulation with $\text{Re}=100$ for \texttt{Case B} and \texttt{Case C}.}
  \label{fig:error_comparison_actuator}
\end{figure}

\subsection{Accuracy of the eigenbound estimates} \label{sec:comparison_eval}

The accuracy of the eigenvalue estimation using \texttt{RedEigCD} has been tested for both shear-layer roll-up as well as the actuator case. The results also include the eigenbounds obtained by applying Gershgorin's circle theorem directly to $a_iC_{r,i}$, which would replicate \texttt{EigenCD} for ROMs.
Figure \ref{fig:eval_accuracy} presents the results.

\begin{figure}[h]
\centering
  \includegraphics[height=0.28\textwidth]{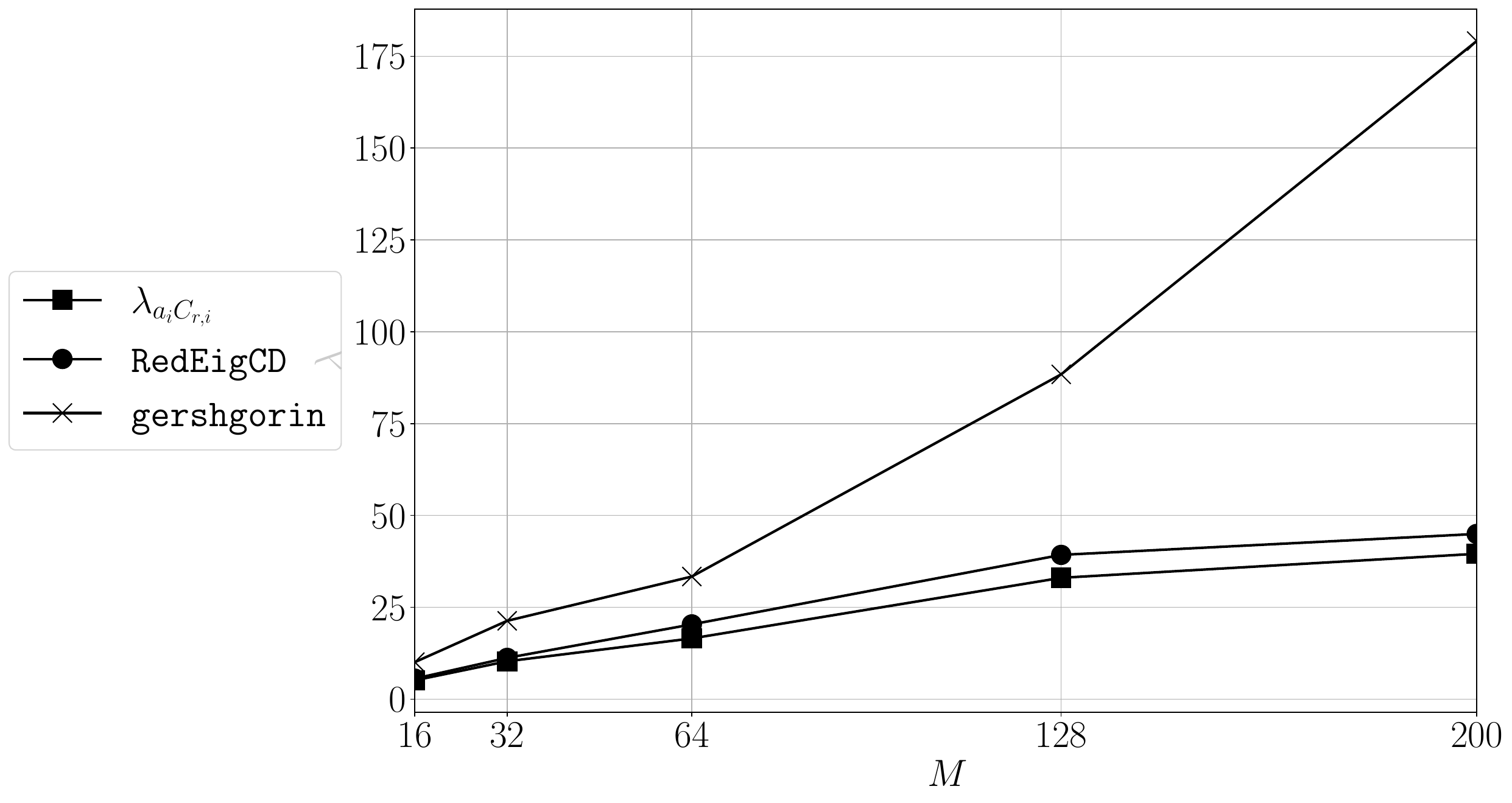}
  \includegraphics[height=0.28\textwidth]{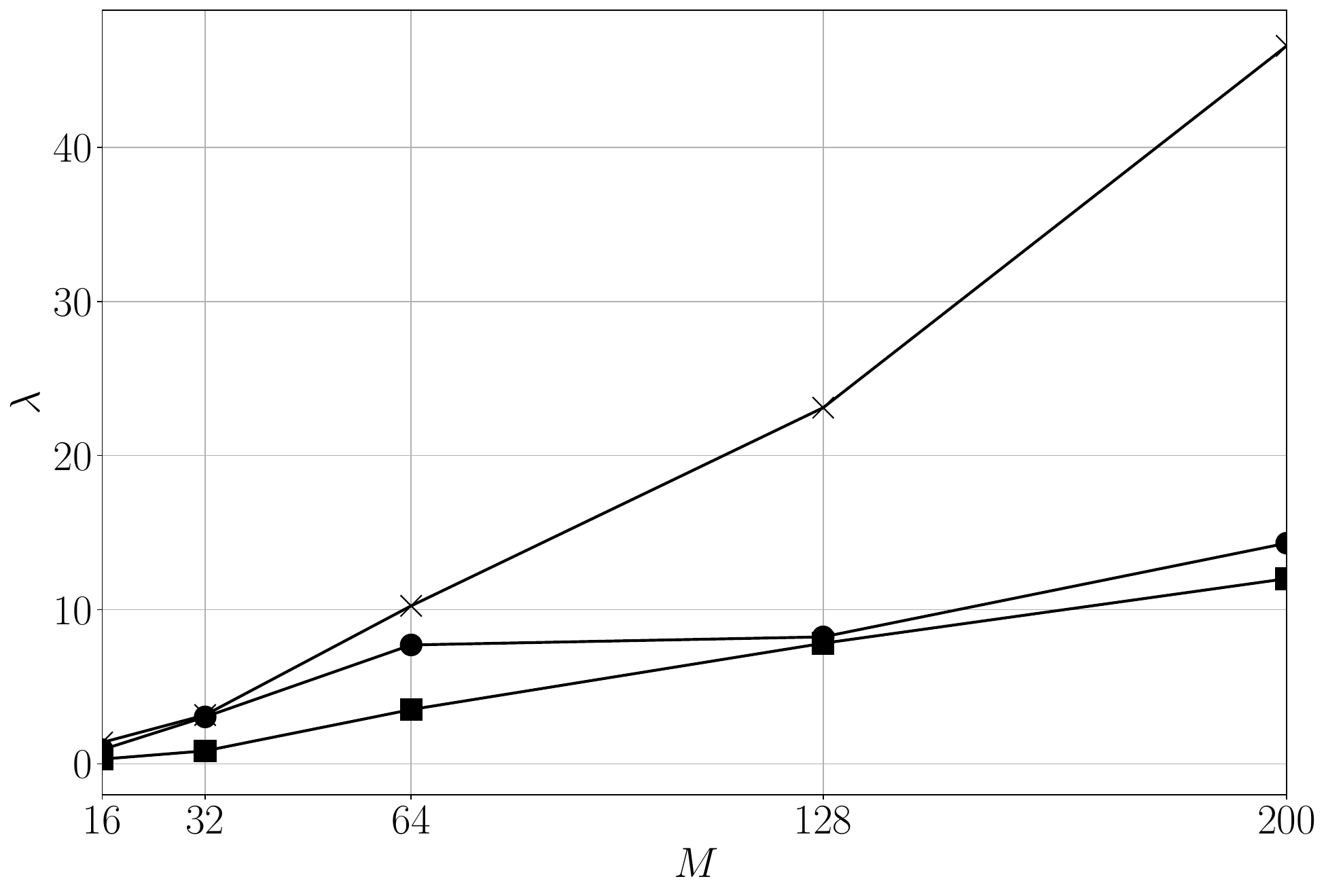}
  \caption{Comparison of the actual eigenvalues of the reduced convective operator at $t=8\pi$ for the shear layer case of Re=1000 (left) and the actuator case of Re=100 (right) against the estimated eigenvalues using \texttt{RedEigCD} and the corresponding Gershgorin circles.}
  \label{fig:eval_accuracy}
\end{figure}

To compute an overall error between the actual eigenvalues and the estimations, the following error is defined:

\begin{equation}
  \varepsilon_\mathrm{est} = \frac{||\lambda_{a_iC_{r,i}}-\lambda_\mathrm{est}||_\infty}{||\lambda_{a_iC_{r,i}}||_\infty},
  \label{eq:error_eigenvalues}
\end{equation}

\noindent where $\lambda_{a_iC_{r,i}}$ is the actual eigenbound of the Jacobian of the reduced convective operator at $t=20$, and $\lambda_\mathrm{est}$ is the estimated eigenbound. The errors for \texttt{RedEigCD} are 0.158 and 0.349 for the shear-layer and actuator case, respectively. On the other hand, for the Gershgorin approximation, the errors are 3.53 and 2.88, respectively. Therefore, the estimation of eigenbounds for both cases using \texttt{RedEigCD} is significantly more accurate than the one obtained using Gershgorin's circle theorem.

\subsection{Computational performance of \texttt{RedEigCD}}
Assessing the performance of \texttt{RedEigCD} in terms of its wall-clock time requires analyzing the online computational efficiency as well as the offline overhead. While a complexity analysis on the costs is already performed and defined in Table \ref{tab:comparison}, an assessment on actual runs is also provided here.

The online computational efficiency of the \texttt{RedEigCD} algorithm is evaluated by comparing the total online wall-clock time of the ROM against the FOM. We define the total speed-up factor, $\mathcal{S}$, as the product of the reduction in cost per iteration and the increase in the average timestep size:
\begin{equation}
\mathcal{S} = \frac{T_\mathrm{FOM}}{T_\mathrm{ROM}}\frac{{\Delta t}^\mathrm{ROM}}{\Delta t^\mathrm{FOM}},
\label{eq:total_speedup}
\end{equation}
where $T_\mathrm{ROM}$ and $T_\mathrm{FOM}$ denote the wall-clock time per iteration of the ROM and the FOM, respectively. Figure {\ref{fig:comp_eff}} visualizes these components for the shear-layer (left) and the actuator disk (right). The results indicate that while for small values of $M$ the computational complexity of $\mathcal O(M^3)$ is acceptable, for the larger values tested, hyper-reduction would be a preferable option. However, \texttt{RedEigCD} shows benefits in the whole range of investigated modes.
\begin{figure}
    \centering
    \includegraphics[height=0.3\textwidth]{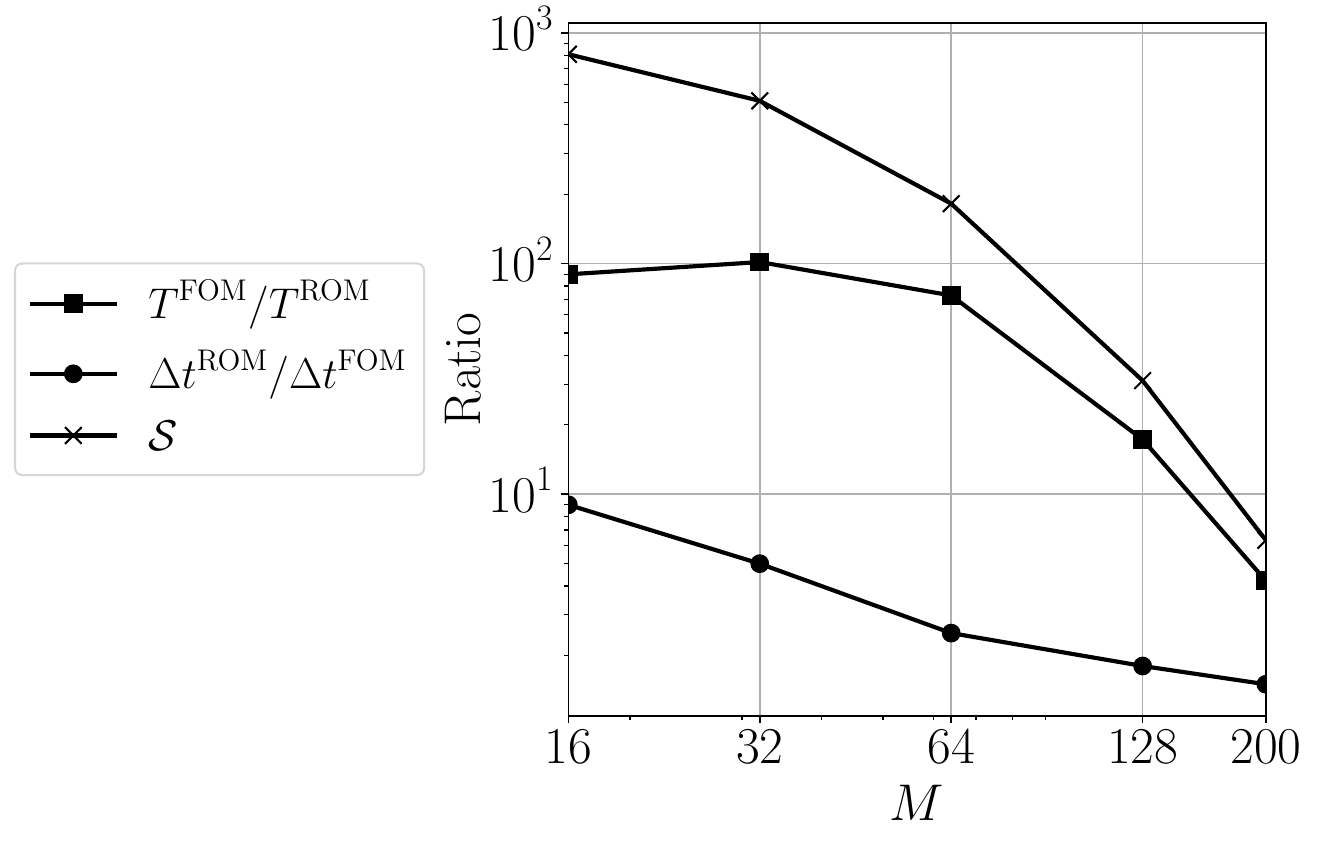}
    \includegraphics[height=0.3\textwidth]{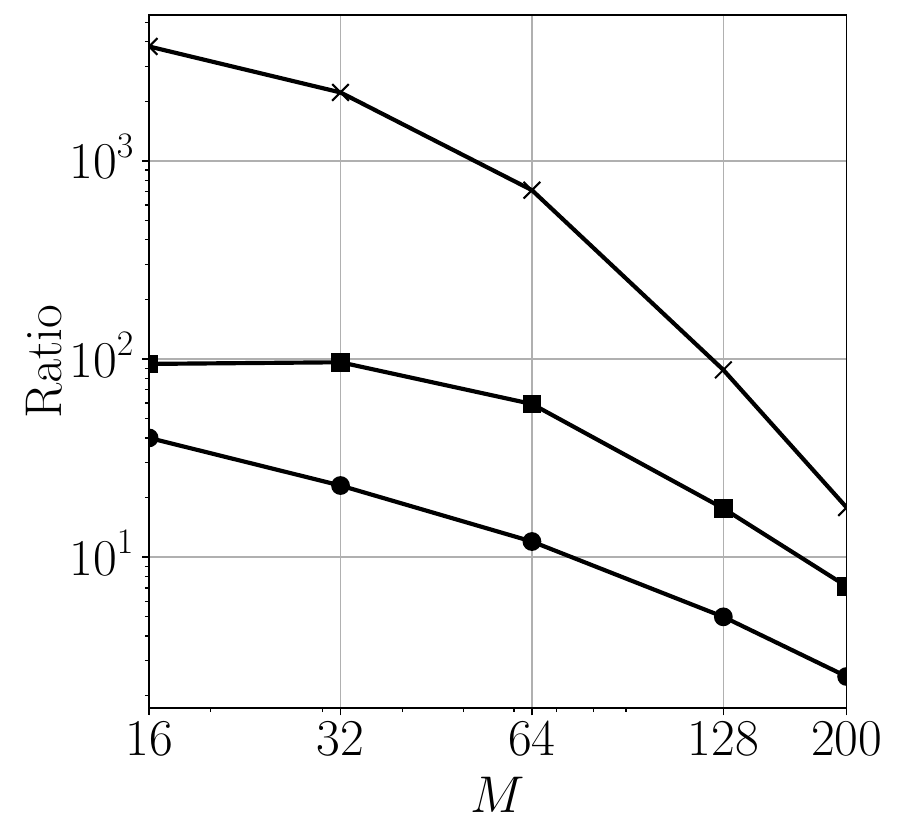}
    \caption{Total speed-up of the \texttt{RedEigCD} algorithm for the roll-up of a shear-layer at Re=1000 (left) and the numerical simulation of a wind field of Re=100 (right).}
    \label{fig:comp_eff}
\end{figure}

On the other hand, regarding the offline costs, we define a cost metric $\mathcal C$ as the equivalent of ROM timesteps required to perform the offline setup of \texttt{RedEigCD}:
\begin{equation}
   \mathcal C = \frac{T_\mathrm{offline}}{T_\mathrm{ROM}},
    \label{eq:offline_cost}
\end{equation}
where $T_\mathrm{offline}$ corresponds to the total wall-clock time spent in the offline stage. For the most expensive case ($M=200$), this yields an offline cost of approximately 20 ROM iterations. While for extremely short simulations this overhead may be perceptible, for production runs involving thousands of timesteps, the cost of the offline stage becomes less than 1\% of the whole simulation, which makes it insignificant.

\section{Conclusion} 
\label{sec:conclusion}

In this paper, a novel self-adaptive timestepping framework, \texttt{RedEigCD}, has been presented for the time integration of POD-Galerkin ROMs of the incompressible Navier-Stokes equations. The methodology builds upon the conceptual foundations of \texttt{EigenCD} \cite{trias_self-adaptive_2011} and \texttt{AlgEigCD} \cite{trias_efficient_2024}, wherein the eigenbounds of the convective and diffusive operators are estimated to ensure the system remains at the edge of the stability region of the explicit time integration scheme. To preserve the online efficiency inherent to reduced-order modeling, \texttt{RedEigCD} avoids operations scaling with the FOM dimension by utilizing the sub-additivity of eigenbounds and Bendixson's rectangle. By extending the decomposition approach of \citet{sanderse_non-linearly_2020} and \citet{rosenberger_no_2023}, the reduced convective operator is split into quadratic and linear contributions, allowing the stability limit to be computed online as a simple linear combination of offline-calculated bounds.

Our proposed method specifically addresses the stability constraint, which defines the maximum allowable timestep to prevent numerical divergence. While in certain scenarios, particularly with implicit solvers, the maximum stable timestep can exceed that required for temporal accuracy, \texttt{RedEigCD} is tailored to explicit integration. Nonetheless, in cases where the accuracy limit may be exceeded, \texttt{RedEigCD} can be used in a dual-criterion approach.

The presence of non-homogeneous boundary conditions was addressed by splitting their contribution into symmetric and skew-symmetric parts. This allows the method to remain robust even when boundary interactions break the inherent skew-symmetry of the underlying symmetry-preserving ROM. 

Numerical validation on a periodic shear-layer roll-up demonstrated that the $\Delta t$ obtained via \texttt{RedEigCD} is systematically larger than the FOM limit. This is theoretically supported by the interlacing of eigenvalues, which ensures smaller eigenbounds for the reduced operators. The benefits in timestep size for the ROM+\texttt{RedEigCD} for the shear-layer roll-up are up to a factor of 9, depending on the modal truncation and flow conditions, yielding significant computational speedups.

Furthermore, the framework was tested in a complex wind field simulation featuring time-dependent boundary conditions and actuator disks. The results confirmed that the ROM stability limit remains consistently larger than its FOM counterpart even under transient forcing. In this configuration, \texttt{RedEigCD} achieved improvements in timestep size of up to a factor of 40. In both test cases, the method demonstrated superior accuracy compared to Gershgorin's circle theorem, providing a sharp and reliable estimate of the exact reduced eigenbounds.

Higher Reynolds numbers than those tested increase the spectral radius and tighten stability limits. In this situation, we expect \texttt{RedEigCD} to remain robust as numerical stability is governed by the eigenvalues of the linearized system. Though nonlinear stability analysis would offer a global perspective, the linear spectral bounding used here provides a more computationally efficient proxy for real-time adaptive timestepping. Flow regimes with high Reynolds number are often characterized by a slow decay of singular values, requiring a larger number of modes $M$ that severely increases the computational cost. Even though our method, which has been tested for cases with a faster decay in this study, will still provide the maximum stable timestep, the benefits in terms of timestep will be smaller in cases that require a larger number of modes, leading to timesteps closer to those of the FOM.

A key theoretical takeaway of this work is the formal connection established between linear stability theory and reduced-order modeling. Based on the combined theorems of Bendixson \cite{bendixson_sur_1902} and Rao \cite{rao_separation_1979}, it has been proven that the stable timestep achievable in a projection-based ROM is always larger than or equal to that of the FOM. This is justified by the fact that FOM stability is dictated by the smallest spatial scales (associated with the largest eigenvalues), which are precisely the components truncated during the POD projection. \texttt{RedEigCD} preserves ROM scalability with an online complexity of $\mathcal O(M)$, making the cost of the adaptive controller negligible compared to the $\mathcal O(M^3)$ cost of the state advancement.

Future work will explore the generalization of this framework to hyperreduced convective terms to further reduce online costs to $O(M^2)$. Additionally, the method will be evaluated in more complex geometries to test its robustness in highly irregular spectral environments. Finally, the proposed \texttt{RedEigCD} framework can be extended to a wide range of non-quadratic systems that admit a quadratic-bilinear representation through lifting transformations {\cite{qian_lift_2020}}, which recast many smooth nonlinearities into a form where \texttt{RedEigCD} can be directly applied. 


\section*{Acknowledgements}

This work is supported by the \textit{Ministerio de Economía y Competitividad},
Spain, SIMEX project (PID2022-142174OB-I00). J.P-R. is also supported by the
Catalan Agency for Management of University and Research Grants (AGAUR), 2022
FI\_B 00810.
H.R. and B.S. are supported by the project "Discretize first, reduce next" (with project number
VI.Vidi.193.105 of the research programme NWO Talent Programme Vidi which is (partly) financed
by the Dutch Research Council (NWO)).

\appendix

\section{Non-uniformly sampled Proper Orthogonal Decomposition of the snapshots matrix} \label{sec:pod}

The POD basis arises from the solution of the eigenvalue problem \cite{kunisch_optimal_2010}

\begin{equation}
  R\Phi_j = \lambda_j\Phi_j,
\end{equation}

\noindent where $R$ is a correlation matrix between the velocity fields and $\Phi_j$ are
the columns of the POD basis,

\begin{equation}
  R = \frac{1}{T}\int_0^T{\mathbf{u}(t)\mathbf{u}^T(t)~dt},
  \label{eq:r_pod}
\end{equation}

\noindent where $T$ is the total time of sampling. As $\Delta t$ is non-constant when performing an integration using
\texttt{AlgEigCD}, the integral from Eq.\eqref{eq:r_pod} does not reduce to
the classical $XX^T$ correlation matrix typically used in POD. Instead, the integral yields

\begin{equation}
  R = \frac{1}{T}\sum_{i=1}^m{\mathbf{u}_i\mathbf{u}_i^{T}\Delta t_i},
  \label{eq:dt_var}
\end{equation}

\noindent when using a suitable quadrature rule. Introducing $\Delta\in\mathbb{R}^{K\times K}$ as the diagonal matrix containing the
timesteps normalized with $T$ ($\Delta t_i/T$) from each step, Eq.
\eqref{eq:dt_var} yields

\begin{equation}
  R = X\Delta X^T.
\end{equation}

Let $\tilde{X}=X\Delta^{1/2}$, where performing the square root of $\Delta$ is
straightforward given it is a diagonal matrix. Then, the classical POD
optimization problem is obtained in such a way that $R=\tilde{X}\tilde{X}^T$.

To consider the weighted orthonormality condition, the weighted and
unweighted problems are related by $\Phi=\Omega^{-1/2}\hat{\Phi}$, where
$\hat{\Phi}$ stands for the solution of the unweighted problem. Therefore, the
snapshot matrix is also related in such a way that $\hat{X}
= \Omega^{1/2}\tilde{X} = \Omega^{1/2}X\Delta^{1/2}$. Hence, $\hat{\Phi}$ is a product of
the SVD of $\hat{X}$, i.e. 

\begin{equation}
\hat{X} = \hat{\Phi}\Sigma\Psi^*.
\end{equation}

\noindent Hence, the columns of $\hat{\Phi}$, denoted by $\hat{\Phi}_j$ correspond to the
eigenvectors of $\hat{X}\hat{X}^T$. Thus, the procedure to obtain $\Phi$ is
summarized as follows: (i) obtain snapshots of the velocity field to build
$X$ and as a consequence $\hat{X}$, (ii) perform the SVD of $\hat{X}$ to
obtain $\hat{\Phi}$ and as a consequence $\Phi$, (iii) truncate $\Phi$ to the
given number of modes $M$. For more details on this procedure, the reader is
referred to \cite{sanderse_non-linearly_2020}.
\section{Extending \texttt{AlgEigCD} to staggered grids}
\label{sec:stg_aecd}

The derivation performed by \citet{trias_efficient_2024} has been designed for general collocated and unstructured grids, making the methodology very versatile and applicable to practically any industrial application.
Even though the method is developed to be applied to any convective
scheme, it is tailored for a symmetry-preserving discretization
\cite{Trias2014} as all fields are computed at the same locations, thus
ensuring that the duality of the gradient and the divergence that build the Laplacian operator is preserved when those are applied to the velocity field,
together with the fact that for all velocity field components, the convective operator will be the same.
However, in the case of a staggered grid, given the shift in the meshes devoted to the velocity field components, this duality is not exactly preserved \cite{Verstappen2003}. Thus, the method cannot be generalized directly in this framework.

Before moving into the extension to staggered grids, it will become important to settle the base theorem used for \texttt{AlgEigCD} in order to simplify the estimates of the eigenbounds of operators to simple algebraic operations. To keep the idea from \texttt{AlgEigCD} \cite{trias_efficient_2024}, the same property is used:

\begin{theorem}
(Theorem 1 in \cite{trias_efficient_2024}). Let $A\in\mathbb{R}^{n\times m}$ and
$B\in\mathbb{R}^{m\times n}$ be rectangular matrices and $m\geq n$. Then, the
square matrices $AB\in\mathbb{R}^{n\times n}$ and $A^TB^T\in\mathbb{R}^{m\times
m}$ have the same eigenvalues except for the zero-valued ones.
  \label{th:abba}
\end{theorem}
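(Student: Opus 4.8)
The plan is to reduce the statement to the classical fact that $AB$ and $BA$ share their nonzero spectrum. First I would observe that $A^TB^T=(BA)^T$, where $BA\in\mathbb{R}^{m\times m}$; since any square matrix and its transpose have the same characteristic polynomial, and hence identical eigenvalues with identical algebraic multiplicities, comparing the eigenvalues of $AB$ with those of $A^TB^T$ is equivalent to comparing those of $AB\in\mathbb{R}^{n\times n}$ with those of $BA\in\mathbb{R}^{m\times m}$. It therefore suffices to show that $AB$ and $BA$ agree on all of their nonzero eigenvalues, counted with multiplicity.

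To establish this, I would exhibit an explicit block similarity rather than manipulate determinants directly. Consider the $(n+m)\times(n+m)$ matrices
\[
P = \begin{pmatrix} I_n & A \\ 0 & I_m \end{pmatrix}, \quad
X = \begin{pmatrix} AB & 0 \\ B & 0 \end{pmatrix}, \quad
Y = \begin{pmatrix} 0 & 0 \\ B & BA \end{pmatrix},
\]
where $P$ is invertible. A direct block multiplication verifies $XP=PY$, so $X$ and $Y$ are similar and thus share a characteristic polynomial. Because both $X$ and $Y$ are block lower-triangular, their characteristic polynomials factor along the diagonal blocks, giving
\[
\lambda^{m}\det(\lambda I_n - AB) = \det(\lambda I_{n+m} - X) = \det(\lambda I_{n+m} - Y) = \lambda^{n}\det(\lambda I_m - BA).
\]

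Cancelling the common factor $\lambda^{n}$ (legitimate as a polynomial identity since $m\geq n$) leaves $\det(\lambda I_m - BA)=\lambda^{m-n}\det(\lambda I_n - AB)$, which is exactly the assertion: every nonzero eigenvalue of $AB$ is an eigenvalue of $BA$ with the same multiplicity and vice versa, while $BA$ carries precisely $m-n$ additional zero eigenvalues forced by the difference in size. Transposing back transfers the conclusion from $BA$ to $A^TB^T$. I do not anticipate a genuine obstacle; the only point demanding care is the bookkeeping of the powers of $\lambda$, so as to confirm that the qualifier ``except for the zero-valued ones'' corresponds exactly to the $m-n$ dimension discrepancy, and that the match of the nonzero spectra holds as multisets (with multiplicities) rather than merely as sets.
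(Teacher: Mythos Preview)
Your argument is correct. The reduction $A^TB^T=(BA)^T$ together with the block-similarity computation is the standard route to the identity $\det(\lambda I_m-BA)=\lambda^{m-n}\det(\lambda I_n-AB)$, and your bookkeeping of the extra $m-n$ zero eigenvalues is accurate.

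As for comparison with the paper: there is nothing to compare against. The paper does not supply a proof of this theorem; it merely quotes it as Theorem~1 of \cite{trias_efficient_2024} and uses it as a tool in Appendix~B. Your self-contained proof is therefore an addition rather than an alternative, and it is the classical one (sometimes presented via Sylvester's determinant identity or, equivalently, the Schur-complement factorization of $\begin{pmatrix}\lambda I_n & A\\ B & I_m\end{pmatrix}$). Nothing further is needed.
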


In terms of the diffusive operator, a second-order symmetry-preserving operator
can be built by concatenating two first-order operators
\cite{Verstappen2003, sanderse_boundary_2014}, which leads to

\begin{equation}
  D = K\Lambda S,
  \label{eq:diffusive_stg}
\end{equation}

\noindent where $K\in\mathbb{R}^{N_V\times N_F}$ is a differencing matrix
from staggered faces ($N_F$) to staggered
cells ($N_V$), $\Lambda\in\mathbb{R}^{N_F\times N_F}$ corresponds to a diagonal matrix containing the diffusivities
at the staggered faces, and $S\in\mathbb{R}^{N_F\times N_V}$ is a differencing matrix from staggered cells
to staggered faces. Thus, from \cite{trias_efficient_2024} it follows that
a family of $\alpha$-dependent matrices can be built using Theorem
\ref{th:abba} such that their eigenbound is the same as the one from the diffusive term (Eq.
\eqref{eq:diffusive_stg}). Hence, applying Theorem \ref{th:abba} with $A=K\Lambda^\alpha$ and
$B=\Lambda^{1-\alpha}S$,
\begin{equation}
  \Lambda^\alpha K_h^TS_h^T\Lambda^{1-\alpha}.
  \label{eq:dtrans}
\end{equation}
Therefore, the Gershgorin circle theorem can be applied straightaway to Eq.
\eqref{eq:dtrans} as it will have the same eigenbound as the diffusive
term, which leads to
\begin{equation}
  \rho(D) \leq
  \mathrm{max}\{\mathrm{diag}(\Lambda^\alpha)\circ|K^TS^T|\mathrm{diag}(\Lambda^{1-\alpha})\},
\end{equation}

\noindent where $\circ$ denotes the Hadamard product, i.e., elementwise product.

The benefit of using this method instead of the Gershgorin circle theorem
straight-away to $D$ is that $|K^TS^T|$ can be build in a pre-process
stage.
Thus, every timestep, assuming $\Lambda$ varies as in a LES framework, only
a sparse matrix-vector product (\texttt{SpMV}) and possibly (if $\alpha\neq0$) an elementwise
vector-vector product (\texttt{axty}) should be performed, opposite to needing
to rebuild the matrix every step, as for \texttt{EigenCD}
\cite{trias_self-adaptive_2011}.

With regards to the convective term, \citet{sanderse_non-linearly_2020} defines its construction in a staggered framework as

\begin{equation}
  C(\mathbf{u}_s)\mathbf{u}_s
  = K((\Pi\mathbf{u}_s+\mathbf{y}_I)\circ(A\mathbf{u}_s+\mathbf{y}_A)),
\end{equation}

\noindent where $\Pi\in\mathbb{R}^{N_F\times N_V}$ interpolates the fluxes to
the staggered faces, and $A\in\mathbb{R}^{N_F\times N_V}$
interpolates the velocities to the staggered faces. Moreover,
$\mathbf{y}_A,\mathbf{y}_I\in\mathbb{R}^{N_F}$ apply
the boundary conditions to the interpolated fields in order to account for
them. However, $\mathbf{y}_A$ can be dropped for the stability analysis as it does not have an associated volume. Thus, they would cancel out when the volume matrix is added to the equation. However, the interpolated boundary conditions vector $y_I$ is associated with faces and thus they should still be considered. 

Thus, the convective term can be rewritten as

\begin{equation}
  C(\mathbf{u}_s) = KFA,
\end{equation}

\noindent where
$F=\mathrm{diag}(\Pi\mathbf{u}_s+\mathbf{y}_I)\in\mathbb{R}^{N_F\times N_F}$ is a matrix
containing the fluxes at the staggered faces in its diagonal.
\citet{trias_efficient_2024} shows that using Theorem \ref{th:abba} straight
onto this definition does not give any benefit, as more entries are
considered in the estimate, opposite to applying Gershgorin directly to
$C(\mathbf{u}_s)$ \cite{trias_efficient_2024}. Thus, let us recall the theorems used to develop the method.

\begin{theorem}
  (Perron-Frobenius (PF) theorem, \cite{perron_zur_1907,frobenius_uber_1912}). Given
  an irreducible matrix $A\in\mathbb{R}^{n\times n}$ such that $[A]_{ij}
  \geq 0,~\forall i,j$, it has a unique real and positive largest (in magnitude)
  eigenvalue $r\in\mathbb{R}^+$ and corresponding eigenvector with only positive entries, i.e.,
  \begin{equation}
    A\mathbf{v} = r\mathbf{v} \Rightarrow |\lambda|<
    r~\text{and}~v_i>0,~\forall i\in\{1,\dots,n\},
  \end{equation}
  \noindent where $\lambda$ denotes any eigenvalue of $A$ except $r$, and $r$ is the Perron-Frobeniuseigenvalue.
  \label{th:pf}
\end{theorem}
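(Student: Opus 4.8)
The plan is to prove the theorem through the Collatz--Wielandt variational characterization, which produces the Perron root, its positive eigenvector, and the dominance property at once. Writing $\Delta = \{\mathbf x \in \mathbb R^n : \mathbf x \geq \mathbf 0,\ \sum_i x_i = 1\}$ for the standard simplex, I would introduce the functional
\begin{equation}
  f(\mathbf x) = \min_{i:\, x_i > 0} \frac{(A\mathbf x)_i}{x_i}, \qquad r := \sup_{\mathbf x \in \Delta} f(\mathbf x).
\end{equation}
Boundedness of $A$ on $\Delta$ makes $r$ finite, and taking $\mathbf x = \tfrac1n\mathbf 1$ gives $r > 0$, since irreducibility forbids a zero row and hence every $(A\mathbf x)_i > 0$.

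The central step is attainment of the supremum at a \emph{strictly positive} vector. Because $f$ is discontinuous across the faces of $\Delta$, I would pass to the matrix $B = (I+A)^{n-1}$, which is strictly positive for irreducible nonnegative $A$ and commutes with $A$. From $A\mathbf x \geq f(\mathbf x)\,\mathbf x$ and $B \geq 0$ one obtains $A(B\mathbf x) = B(A\mathbf x) \geq f(\mathbf x)\,B\mathbf x$, so $f(B\mathbf x) \geq f(\mathbf x)$ while $B\mathbf x > \mathbf 0$. Hence the supremum over $\Delta$ equals the supremum over the compact set of normalized images $B\mathbf x$, where $f$ is continuous, so a maximizer $\mathbf v > \mathbf 0$ exists. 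I would then show $A\mathbf v = r\mathbf v$ by contradiction: if $(A\mathbf v)_j > r v_j$ for some $j$, perturbing $\mathbf v$ by $\varepsilon\mathbf e_j$ strictly increases $f$, contradicting maximality, so equality holds in every coordinate and $\mathbf v$ is a strictly positive eigenvector with eigenvalue $r$.

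To identify $r$ with the spectral radius, I would take any eigenpair $A\mathbf x = \lambda \mathbf x$ and apply the triangle inequality componentwise: $|\lambda|\,|\mathbf x| = |A\mathbf x| \leq A|\mathbf x|$, so the normalization of $|\mathbf x|$ lies in $\Delta$ with $f(|\mathbf x|) \geq |\lambda|$, giving $|\lambda| \leq r$. Uniqueness of the positive eigendirection follows from $B > 0$: if $\mathbf u, \mathbf v > \mathbf 0$ are two eigenvectors for $r$, then $\mathbf w = \mathbf u - t\mathbf v$ with $t = \min_i u_i/v_i$ is a nonnegative $r$-eigenvector with a vanishing coordinate, which forces $\mathbf w = \mathbf 0$ because $B\mathbf w = (1+r)^{n-1}\mathbf w$ would otherwise be strictly positive; simplicity of $r$ then follows from the positivity of the paired left eigenvector.

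The hard part will be the strict inequality $|\lambda| < r$ for every $\lambda \neq r$. This is genuinely delicate, because irreducibility alone does not suffice: the cyclic permutation matrix is irreducible yet has the full set of $n$-th roots of unity as eigenvalues, all of modulus $r = 1$. The strict bound as written holds exactly when $A$ is \emph{primitive} (irreducible and aperiodic), in which case equality $|\lambda| = r$ in the triangle-inequality step would force the entries of $\mathbf x$ to share a common phase, collapsing $|\mathbf x|$ to a positive eigenvector and hence $\lambda = r$. For the general irreducible case I would instead state the conclusion as $|\lambda| \leq r$ with $r$ simple and attained, noting that the peripheral eigenvalues are exactly $r$ times the $h$-th roots of unity, where $h$ is the period of $A$.
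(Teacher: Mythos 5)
The paper never proves this statement: it is quoted as a classical result (Perron 1907, Frobenius 1912) and used only through Wielandt's theorem downstream, so your proposal stands or falls on its own merits. On those merits it is essentially the standard Wielandt proof, correctly assembled: the Collatz--Wielandt functional, the regularization $B=(I+A)^{n-1}>0$ to restore compactness and continuity of $f$, the componentwise triangle inequality giving $|\lambda|\leq r$, and the uniqueness argument via $\mathbf w=\mathbf u-t\mathbf v$ are all sound. One local repair is needed in the attainment step: perturbing the maximizer by $\varepsilon\mathbf e_j$ does not visibly increase $f$ \emph{strictly}, because at coordinates $i\neq j$ where $(A\mathbf v)_i=rv_i$ the ratio grows only if $a_{ij}>0$, which irreducibility does not guarantee for that particular pair $(i,j)$; the clean argument applies $B$ to the residual instead --- if $A\mathbf v-r\mathbf v\geq\mathbf 0$ were nonzero, then $B(A\mathbf v-r\mathbf v)>\mathbf 0$, i.e. $A(B\mathbf v)>rB\mathbf v$ componentwise, so $f(B\mathbf v)>r$, contradicting the definition of $r$. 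You already have every ingredient for this fix.

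Your closing observation is the most valuable part of the proposal and is correct: the theorem as printed in the paper is overstated. For a merely irreducible nonnegative matrix, the strict conclusion $|\lambda|<r$ for all $\lambda\neq r$ is false --- the cyclic permutation matrix is exactly the counterexample, with peripheral eigenvalues $re^{2\pi ik/h}$ for the period $h$ --- and strict spectral dominance requires primitivity. Fortunately this does not damage the paper: the only downstream use, via Wielandt's theorem (Theorem \ref{th:wie}) in the chain of Eq.\eqref{eq:rel_rho}, needs only that $r$ equal the spectral radius, i.e. $|\lambda|\leq r$ with $r$ real, simple, and attained by a positive eigenvector, which is precisely the weaker (and correct) form you propose. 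Your restatement is the version the paper should have quoted.
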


\begin{theorem}
  (Wielandt's theorem, \cite{gantmakher_applications_2005}). Given a matrix $A$
  satisfying Theorem \ref{th:pf}, and a matrix $B\in\mathbb{R}^{n\times n}$
  such that $|[B]_{ij}|\leq[A]_{ij}$, $\forall i,j$. Then, any $\lambda^B$
  satisfies $|\lambda^B|\leq r$, where $r$ is the PF eigenvalue of $A$.
  \label{th:wie}
\end{theorem}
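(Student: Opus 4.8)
The plan is to deduce the eigenvalue bound directly from the defining eigenrelation of $B$, combined with a strictly positive left eigenvector of $A$ supplied by Theorem \ref{th:pf}. First I would fix an arbitrary eigenvalue $\lambda^B$ of $B$ together with a nonzero eigenvector $\mathbf{x}$, so that $B\mathbf{x}=\lambda^B\mathbf{x}$. Writing this relation component-wise as $\lambda^B x_i = \sum_j [B]_{ij} x_j$, and applying the triangle inequality together with the hypothesis $|[B]_{ij}|\leq [A]_{ij}$, I obtain
\begin{equation}
  |\lambda^B|\,|x_i| = \left|\sum_j [B]_{ij} x_j\right| \leq \sum_j |[B]_{ij}|\,|x_j| \leq \sum_j [A]_{ij}\,|x_j|,
\end{equation}
for every $i$. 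Introducing the nonnegative vector $|\mathbf{x}|$ with entries $|x_i|$, this is the compact component-wise inequality $|\lambda^B|\,|\mathbf{x}| \leq A|\mathbf{x}|$.

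The next step is to contract this inequality against a strictly positive left eigenvector of $A$. Since $A$ is irreducible and entrywise nonnegative, so is $A^T$, and it shares the Perron-Frobenius eigenvalue $r$. Applying Theorem \ref{th:pf} to $A^T$ therefore furnishes a vector $\mathbf{w}$ with $w_i>0$ for all $i$ satisfying $A^T\mathbf{w}=r\mathbf{w}$, equivalently $\mathbf{w}^T A = r\mathbf{w}^T$. Left-multiplying the inequality $|\lambda^B|\,|\mathbf{x}| \leq A|\mathbf{x}|$ by $\mathbf{w}^T$, which preserves the inequality because $\mathbf{w}$ has nonnegative entries, yields
\begin{equation}
  |\lambda^B|\,\mathbf{w}^T|\mathbf{x}| \leq \mathbf{w}^T A|\mathbf{x}| = r\,\mathbf{w}^T|\mathbf{x}|.
\end{equation}
Because $\mathbf{w}$ is strictly positive while $|\mathbf{x}|$ is nonnegative and nonzero, the scalar $\mathbf{w}^T|\mathbf{x}|$ is strictly positive, so dividing through gives $|\lambda^B|\leq r$. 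As $\lambda^B$ was arbitrary, the claim follows.

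The main obstacle is securing the strictly positive \emph{left} eigenvector rather than merely a right one: Theorem \ref{th:pf} as stated produces a positive eigenvector for the matrix itself, so the key observation is that both irreducibility and nonnegativity are preserved under transposition and that the Perron-Frobenius eigenvalue is invariant, allowing the theorem to be reapplied to $A^T$. A secondary point requiring care is the strict positivity of $\mathbf{w}^T|\mathbf{x}|$, which is precisely what permits the final division; this relies on $\mathbf{w}$ being strictly (not merely weakly) positive together with $\mathbf{x}\neq\mathbf{0}$.
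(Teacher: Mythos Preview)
Your proof is correct and follows the standard Perron--Frobenius argument: bound $|\lambda^B|\,|\mathbf{x}|\leq A|\mathbf{x}|$ componentwise, then contract with a strictly positive left Perron eigenvector of $A$ to cancel the positive scalar $\mathbf{w}^T|\mathbf{x}|$. The two points you flag as requiring care---that irreducibility and the Perron eigenvalue are preserved under transposition, and that strict positivity of $\mathbf{w}$ together with $\mathbf{x}\neq\mathbf{0}$ guarantees $\mathbf{w}^T|\mathbf{x}|>0$---are handled correctly.

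There is, however, nothing to compare against: the paper does not prove this theorem. It is stated with a citation to Gantmakher as a classical tool and then invoked in the derivation of the convective eigenbound (Eq.~\eqref{eq:rel_rho}), but no proof is supplied in the paper itself. Your argument is essentially the textbook one, so it is entirely appropriate here.
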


\begin{theorem}
  (Nikiforov's second Lemma, \cite{nikiforov_chromatic_2007}). Let
  $A\in\mathbb{R}^{n\times n}$ be an irreducible, i.e. it is not similar via
  a permutation of a block upper triangular matrix, and a non-negative symmetric matrix
  $R\in\mathbb{R}^{n\times n}$ such that $[R]_{ii}
  = \sum_{j=1}^n{[A]_{ij}}$ and $[R]_{ij}=0$ if $i\neq j$. Then,
  \begin{equation}
    \rho\left(R+\frac{1}{q-1}A\right) \geq \frac{q}{q-1}\rho(A),
  \end{equation}
  where the equality holds iff $[R]_{ii}=[R]_{jj}$ for any combination of $i$
  and $j$.
  \label{th:niki}
\end{theorem}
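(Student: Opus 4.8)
The plan is to combine the variational (Rayleigh quotient) characterization of the spectral radius of a symmetric matrix with a discrete Dirichlet-energy identity. Throughout I take $q>1$, so that $\tfrac{1}{q-1}>0$ and the matrix $R+\tfrac{1}{q-1}A$ is non-negative, symmetric, and irreducible (the diagonal perturbation $R$ preserves the irreducibility inherited from $A$). By the Perron-Frobenius theorem (Theorem \ref{th:pf}) together with symmetry, its spectral radius equals its largest eigenvalue and is given by
\[
  \rho\Big(R+\tfrac{1}{q-1}A\Big) = \max_{\|\mathbf{x}\|_2=1}\ \mathbf{x}^T\Big(R+\tfrac{1}{q-1}A\Big)\mathbf{x}.
\]
Likewise, $r:=\rho(A)$ is the simple Perron eigenvalue of $A$ with a positive eigenvector $\mathbf{v}$, which I normalize so that $\|\mathbf{v}\|_2=1$.

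First I would substitute $\mathbf{v}$ as a test vector into the Rayleigh quotient. Since $A\mathbf{v}=r\mathbf{v}$ and $\|\mathbf{v}\|_2=1$, this yields
\[
  \rho\Big(R+\tfrac{1}{q-1}A\Big) \ \ge\ \mathbf{v}^T R\mathbf{v} + \tfrac{1}{q-1}\,\mathbf{v}^T A\mathbf{v} \ =\ \mathbf{v}^T R\mathbf{v} + \tfrac{r}{q-1}.
\]
The crux is then the single inequality $\mathbf{v}^T R\mathbf{v}\ge r$. Using the definition $R_{ii}=\sum_j A_{ij}$ and the symmetry $A_{ij}=A_{ji}$, I would establish the identity
\[
  \mathbf{v}^T R\mathbf{v} - \mathbf{v}^T A\mathbf{v} \ =\ \tfrac{1}{2}\sum_{i,j}A_{ij}\,(v_i-v_j)^2,
\]
which is non-negative because every $A_{ij}\ge 0$. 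Combining the two displays gives $\rho\ge r+\tfrac{r}{q-1}=\tfrac{q}{q-1}\,r$, which is the claimed bound.

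For the equality characterization, equality in the final bound forces both intermediate inequalities to be equalities; in particular the Dirichlet energy $\sum_{i,j}A_{ij}(v_i-v_j)^2$ must vanish, so $v_i=v_j$ whenever $A_{ij}>0$. Here irreducibility is essential: the adjacency graph of $A$ is connected, so this relation propagates across all indices and forces $\mathbf{v}$ to be constant, i.e.\ $\mathbf{v}\propto\mathbf{1}$. Then $A\mathbf{v}=r\mathbf{v}$ reads $A\mathbf{1}=r\mathbf{1}$, meaning every row sum of $A$ equals $r$, hence $R_{ii}=R_{jj}=r$ for all $i,j$. The converse is a direct computation: if all row sums coincide then $R=rI$ with $A\mathbf{1}=r\mathbf{1}$, so $R+\tfrac{1}{q-1}A$ has $\mathbf{1}$ as a Perron eigenvector with eigenvalue $r+\tfrac{r}{q-1}=\tfrac{q}{q-1}r$, which is its spectral radius.

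The forward inequality reduces to the one-line Dirichlet-energy estimate, so the delicate part is the equality statement. I expect the main care to be needed in two places: verifying that equality in $\tfrac{q}{q-1}r$ indeed forces equality in the intermediate step $\mathbf{v}^T R\mathbf{v}\ge r$ (and not only in the Rayleigh step), and in invoking irreducibility precisely to pass from ``$v_i=v_j$ on edges'' to ``$\mathbf{v}$ globally constant.'' Consistency of the constant-vector conclusion with $\mathbf{v}$ being the Perron eigenvector is then automatic, since $\mathbf{1}$ is positive and Perron-Frobenius guarantees uniqueness of the positive eigenvector.
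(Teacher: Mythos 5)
The paper never proves Theorem \ref{th:niki}: it is quoted verbatim from Nikiforov \cite{nikiforov_chromatic_2007} and used as a black box in the derivation of Eq.\eqref{eq:rel_rho}, so there is no in-paper proof to compare your argument against. Judged on its own merits, your proof is correct and complete, under the (intended, slightly garbled in the paper's statement) reading that $A$ itself is symmetric, non-negative and irreducible, and that $q>1$ --- both assumptions are genuinely needed and you use each exactly where it matters: symmetry of $A$ to identify row sums with column sums in the identity $\mathbf{v}^TR\mathbf{v}-\mathbf{v}^TA\mathbf{v}=\tfrac12\sum_{i,j}A_{ij}(v_i-v_j)^2$, non-negativity to make that Dirichlet energy a sum of non-negative terms, irreducibility (via Theorem \ref{th:pf}) to obtain a positive Perron vector $\mathbf{v}$ and, in the equality case, to propagate $v_i=v_j$ along edges of the connected adjacency graph. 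The squeeze argument for equality is airtight: from $\rho\geq \mathbf{v}^TR\mathbf{v}+\tfrac{r}{q-1}\geq r+\tfrac{r}{q-1}$, equality in the endpoints forces $\mathbf{v}^TR\mathbf{v}=r$, hence vanishing Dirichlet energy, hence $\mathbf{v}\propto\mathbf{1}$ and constant row sums; your converse correctly identifies the common row sum with $\rho(A)$ via the uniqueness of the positive eigenvector. Two minor points worth making explicit if this were written out in full: the step $\rho(M)=\max_{\|\mathbf{x}\|_2=1}\mathbf{x}^TM\mathbf{x}$ for $M=R+\tfrac{1}{q-1}A$ uses that the Perron root of a non-negative symmetric matrix is its largest eigenvalue (so the spectral radius is not attained at a negative eigenvalue), which you do invoke; and the irreducibility of $M$ itself, while true (a non-negative diagonal shift does not alter the off-diagonal sparsity pattern), is not actually needed for the inequality, only for your converse statement that $\mathbf{1}$ is the Perron vector of $M$. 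This Rayleigh-quotient-plus-Perron-vector route is essentially the standard proof of such spectral bounds in the source literature, so nothing here conflicts with the result as the paper uses it (with $q=2$, where the bound becomes $\rho(A^C)\geq 2\rho(A^{C,\mathrm{off}})$).
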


Hence, Theorem \ref{th:wie} may be used to relate the eigenbound of the
convective operator and such from
\begin{equation}
  A^C\equiv |K||F||A|,
\end{equation}

\noindent which corresponds to a symmetric positive matrix. Thus, by construction of
$A^C$, its off-diagonal terms correspond to $2|C|$ as follows, $A^{C,off}
= A^C-\mathrm{diag}(\mathrm{diag}(A^C)) = 2|C|$, while the rowsums of
$A^{C,off}$ are equal to the diagonal terms of $A^C$ given the skew-symmetry of
$C$. Thus, $A^{C, off}$ satisfies PF, which implies it has a unique largest
eigenvalue. Hence, applying Theorem \ref{th:wie}, this can relate the
eigenvalues of $|C|$ and those from $C$. Thus, Theorem \ref{th:niki} can be
applied to $A^{C,off}$ and $\mathrm{diag}(\mathrm{diag}(A^C))$ such that with
$q=2$, the left-hand side becomes $\rho(A^C)$. This leads to

\begin{equation}
  \rho(A^C) \overset{\mathrm{Thm~\ref{th:niki}}}{\geq} 2\rho(A^{C,off}) = 4\rho(|C|)
  \overset{\mathrm{Thm~\ref{th:wie}}}{\geq} 4\rho(C),
  \label{eq:rel_rho}
\end{equation}

\noindent as in \cite{trias_efficient_2024}, yet with the matrices having
a different definition given their staggered nature. Now applying Eq.
\eqref{eq:rel_rho} together with Theorem \ref{th:abba} and Gershgorin circle
theorem, this leads to

\begin{equation}
  \rho(C) \leq \frac{1}{4}
  \mathrm{max}\{\mathrm{diag}(|F|^\alpha)\circ||K|^T|A|^T|\mathrm{diag}(|F|^{1-\alpha})\}.
\end{equation}

However, as presented in \cite{trias_efficient_2024}, the relevant operators are scaled by $\Omega^{-1}$. Therefore, the eigenbounds of both convective and diffusive operators read as

\begin{subequations}
  \begin{align}
    \rho(\Omega^{-1}D) \leq&
    \mathrm{max}\{\mathrm{diag}(\Lambda^\alpha)\circ|K^T\Omega^{-1}S^T|\mathrm{diag}(\Lambda^{1-\alpha})\},
    \\
    \rho(\Omega^{-1}C) \leq& \frac{1}{2}
    \mathrm{max}\{\mathrm{diag}(|F|^\alpha)\circ||K|^T\Omega^{-1}|A|^T|\mathrm{diag}(|F|^{1-\alpha})\}.
  \end{align}
  \label{eq:stg_aecd}
\end{subequations}

The original version of \texttt{AlgEigCD} \cite{trias_efficient_2024} showed that for a collocated solution of a Rayleigh-Bénard convection problem of $\text{Ra}=10^{10}$, the most suitable value of $\alpha$ should be 0, as it minimizes the bounds for both diffusive and convective operator regardless of the mesh used. This is tested for a 2D shear-layer roll-up for four different uniform meshes with $N=\{10, 20, 40, 80\}$ divisions per direction. Figure \ref{fig:alpha} shows that, while the eigenbounds for the diffusive operator are independent of $\alpha$ (left), the bounds for the convective operator are minimized for a value of $\alpha$ around 0, while still depending on the mesh size.

\begin{figure}[h]
    \centering
    \includegraphics[height=0.3\linewidth]{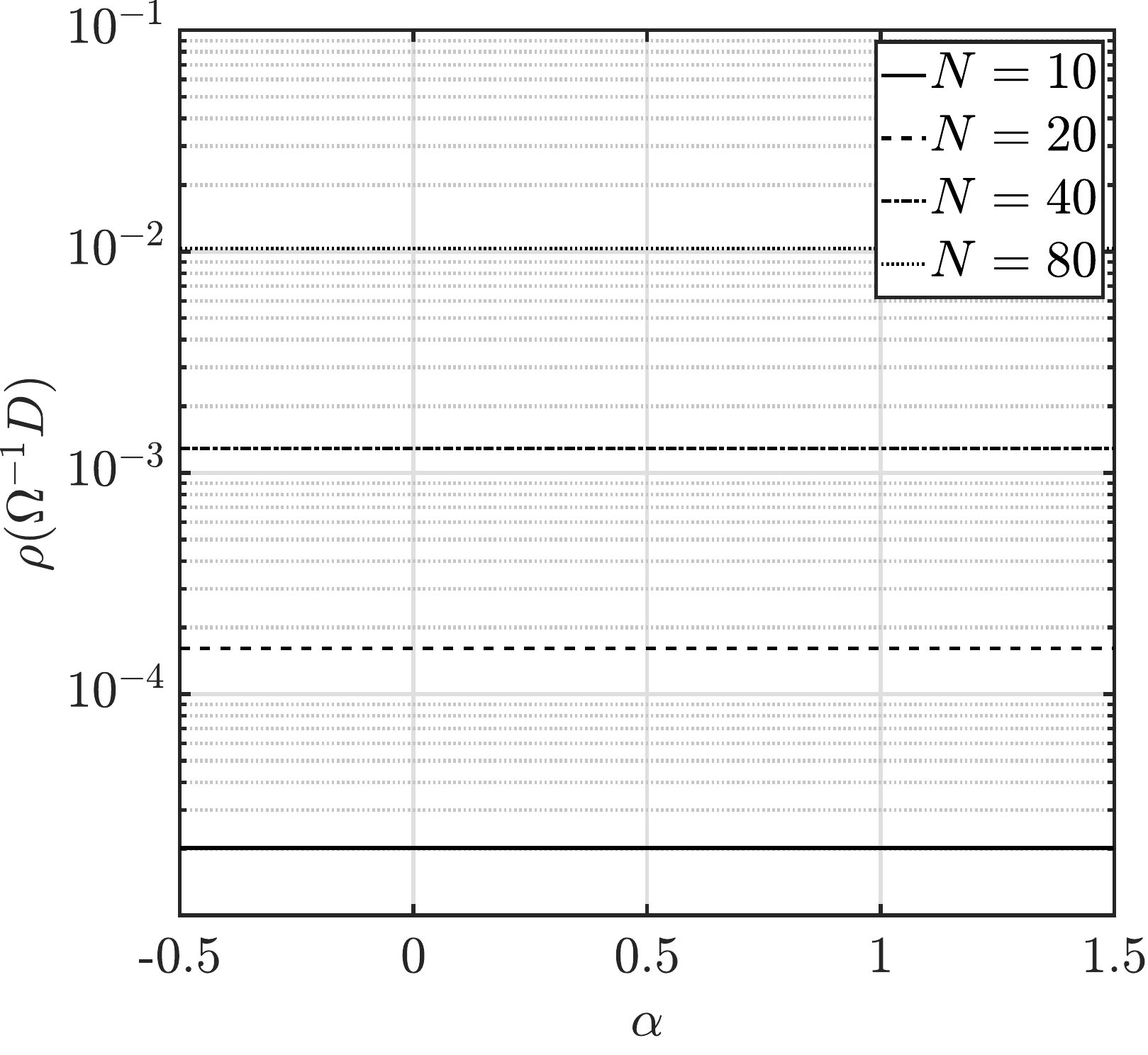}
    \includegraphics[height=0.3\linewidth]{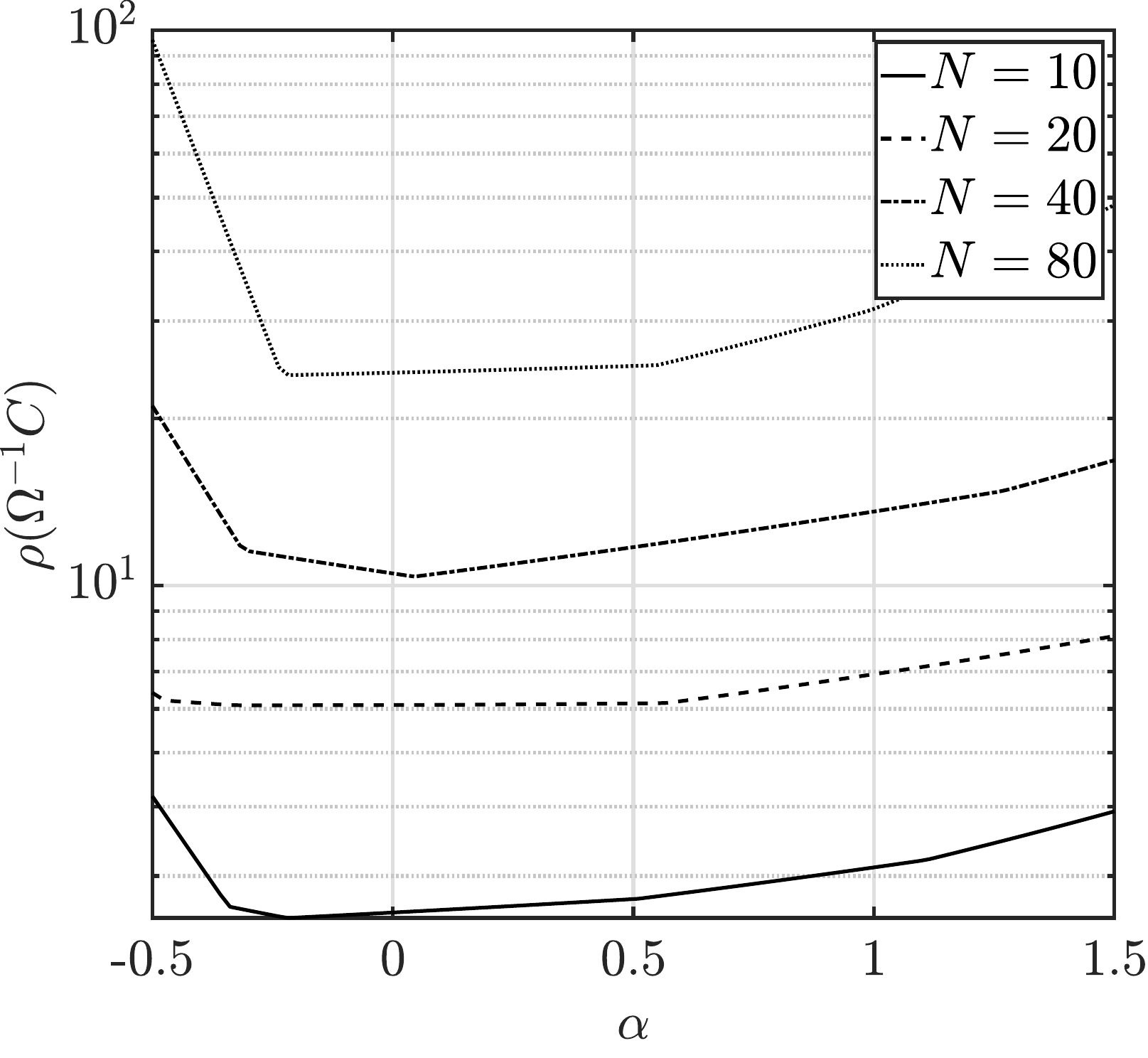}
    \caption{Eigenbound estimation for the diffusive (left) and convective (right) operators for the roll-up of a shear-layer of $\text{Re}=100$ with different mesh sizes as a function of $\alpha$.}
    \label{fig:alpha}
\end{figure}

However, the use of $\alpha\neq0$ introduces an additional operation in Eq.\eqref{eq:stg_aecd}, doubling the number of operations required. Therefore, as the potential benefits of $\alpha\neq0$ are almost negligible (Fig. \ref{fig:alpha}, right), $\alpha=0$ is proposed in this case, which yields

\begin{subequations}
  \begin{align}
    \rho(\Omega^{-1}D) \leq&
    \mathrm{max}\{|K^T\Omega^{-1}S^T|\mathrm{diag}(\Lambda)\},
    \\
    \rho(\Omega^{-1}C) \leq& \frac{1}{2}
    \mathrm{max}\{||K|^T\Omega^{-1}|A|^T|\mathrm{diag}(|F|)\}.
  \end{align}
  \label{eq:stg_aecd}
\end{subequations}

\section{Jacobian of the convective term for a generic convection-diffusion equation} \label{sec:jacobian}

The momentum equation of the Navier-Stokes equations (Eq.\eqref{eq:mom_ns}) is a particular case of a generic convection-diffusion equation,

\begin{equation}
    \frac{\partial\phi}{\partial t} + (\mathbf{u}\cdot\nabla)\phi = \frac{1}{\text{Pe}}\nabla^2\phi + S_\phi,
\end{equation}

\noindent where the transported variable $\phi = \mathbf{u}$, $S_\phi = -\nabla p$ and the Péclet number $\mathrm{Pe}=\mathrm{Re}$. After spatial discretization with a finite-volume method, this generic convection-diffusion equation is written as

\begin{equation}
    \Omega\frac{d\bm\phi}{dt}+C(\mathbf u_c)\bm\phi - D\bm\phi + \Omega\mathbf S = \mathbf 0,
    \label{eq:fom_cd}
\end{equation}

\noindent where $\mathbf u_c$ is the discrete convecting velocity field, $\mathbf S$ is the discrete source term and $\bm\phi\in\mathbb R^{N_C}$ is the discrete generic variable, with $N_C$ being the number of cells ($N_C\approx N_V/d$, being $d$ the number of dimensions). Following Section \ref{sec:rom}, a POD-Galerkin approximation is obtained for both $\mathbf u_c,\bm\phi$. The former is equivalent to Eq.\eqref{eq:ansatz}, while the latter reads as

\begin{equation}
    \bm\phi(t) \approx \bm\phi_r(t) = \Theta\mathbf b(t),
    \label{eq:ansatz_phi}
\end{equation}

where $\Theta\in\mathbb{R}^{N_C\times M}$ is the projection matrix for $\bm\phi$, and $\mathbf b(t)\in\mathbb R^M$ is the coefficient vector for $\bm\phi$. Substituting both Ansatz (Eqs. \eqref{eq:ansatz}, \eqref{eq:ansatz_phi}) with $\Theta$ and $\Phi$ obtained with the procedure detailed in \ref{sec:pod} and projecting it, the ROM equation reads as

\begin{equation}
    \frac{d\mathbf b}{dt} = - \Theta^TC(\Phi\mathbf a_c)\Theta\mathbf b + \Theta^TD\Theta\mathbf b + \Theta^T\mathbf S.
\end{equation}

The reduced convective term now reads $\Theta^TC(\Phi\mathbf a_c)\Theta\mathbf b$, which following the procedure from Eq.\eqref{eq:red_conv} is rewritten as

\begin{equation}
    C_r(\mathbf a_c, \mathbf b) = \Theta^TC(\Phi\mathbf a_c)\Theta\mathbf b = [\underbrace{\Theta^TC(\Phi_1)\Theta}_{C_{r,1}}~\dots~C_{r,M}](\mathbf a_c\otimes\mathbf b) = \left(\sum_{i=1}^M{a_{c,i}C_{r,i}}\right)\mathbf b.
    \label{eq:red_conv_phi}
\end{equation}

The linear stability analysis for this convective term now requires linearizing the reduced convective term. Therefore, the Jacobian of the convective term reads as

\begin{equation}
    J_C(\mathbf a_c) = \frac{\partial C_r (\mathbf a_c, \mathbf b)}{d\mathbf b} = [C_{r,1}~\dots~C_{r,M}](\mathbf a_c \otimes I_M) = \sum_{i=1}^M{a_{c,i}C_{r,i}}.
\end{equation}

\bibliographystyle{elsarticle-num-names} 
\biboptions{sort&compress}
\bibliography{export}

\end{document}